\newcommand\rurl[1]{%
  \href{https://#1}{\nolinkurl{#1}}%
}
\renewcommand*{\hat}{\widehat}
\newcommand{\Sp}{{\mathrm{Sp}(4d,\R)}}
\newcommand{\Spp}{{\mathrm{Sp}(2d,\R)}}
\newcommand{\Mpp}{{\mathrm{Mp}(2d,\R)}}
\newcommand{\re}{{\mathrm{Re} \,}}
\newcommand{\im}{{\mathrm{Im} \,}}
\newcommand{\W}{\mathrm{W}}
\newcommand{\mmm}{M}
\newcommand{\J}{{\mathcal{J}}}
\newcommand{\V}{{\mathcal{V}}}
\newcommand{\D}{\mathcal{D}}
\newcommand{\Ro}{\mathcal{R}}
\newcommand{\A}{\mathcal{A}}
\newcommand{\B}{\mathcal{B}}
\newcommand{\bJ}{\bm{\mathcal{J}}}
\newcommand{\bV}{\bm{\mathcal{V}}}
\newcommand{\bD}{\bm{\mathcal{D}}}
\newcommand{\bRo}{\bm{\mathcal{R}}}
\newcommand{\bA}{{\bm{\mathcal{A}}}}
\newcommand{\ltd}{{L^2(\R^d)}}
\def\R{{\mathbb R}}
\def\T{{\mathbb T}}
\def\C{{\mathbb C}}
\newcommand{\Rd}{{\R^d}}
\newcommand{\Rk}{{\R^k}}
\newcommand{\Rdd}{{\R^{2d}}}
\newcommand{\Cd}{{\C^d}}
\newcommand{\nc}{\mathrm{nc}}
\theoremstyle{plain}
\newtheorem{definition}{Definition}[section]
\newtheorem{theorem}[definition]{Theorem}
\newtheorem{lemma}[definition]{Lemma}
\newtheorem{corollary}[definition]{Corollary}
\newtheorem{proposition}[definition]{Proposition}
\theoremstyle{definition}
\newtheorem{remark}[definition]{Remark}
\numberwithin{equation}{section}
\def\R{{\mathbb R}}
\def\T{{\mathbb T}}
\def\C{{\mathbb C}}
\newcommand{\diag}{\mathrm{diag}}
\newcommand{\F}{{\mathcal{F}}}
\newcommand{\mfr}{metaplectic \tfr }
\newcommand\thankssymb[1]{\textsuperscript{\@fnsymbol{#1}}}
\def\@makefnmark{%
  \leavevmode
  \raise.9ex\hbox{\fontsize\sf@size\z@\normalfont\tiny\@thefnmark}}
\def\bign#1{\mathclose{\hbox{$\left#1\vbox to8.5\p@{}\right.\n@space$}}\mathopen{}}
\newcommand{\abs}[1]{\lvert #1\rvert}
\newcommand{\norm}[1]{\lVert #1\rVert}
\newcommand{\up}{uncertainty principle}
\newcommand{\tfa}{time-frequency analysis}
\newcommand{\tfr}{time-frequency representation}
\newcommand{\ft}{Fourier transform}
\newcommand{\stft}{short-time Fourier transform}
\newcommand{\tf}{time-frequency}
\newcommand{\tfs}{time-frequency shift}
\newcommand{\psdo}{pseudodifferential operator}
\newcommand{\beqa}{\begin{eqnarray*}}
\newcommand{\eeqa}{\end{eqnarray*}}
\newcommand{\field}[1]{\mathbb{#1}}
\newcommand{\bR}{\field{R}}        
\def\rd{\bR^d}
\def\rdd{{\bR^{2d}}}
\def\lrd{L^2(\rd)}
\def\lrdd{L^2(\rdd)}
\def\<{\left<}
\def\>{\right>}
\def\inv{^{-1}}
\def\mv1{M_v^1}
\begin{document}
\title[Uncertainty Principles for  Time-Frequency
  Representations]{More Uncertainty Principles for Metaplectic Time-Frequency
  Representations}
\address{\thankssymb{1}Faculty of Mathematics, University of Vienna, Oskar-Morgenstern-Platz 1, 1090 Vienna, Austria}

\author[\qquad \qquad \qquad \qquad \quad Karlheinz Gr\"ochenig]{Karlheinz Gr\"ochenig \thankssymb{1} 
}
\email{karlheinz.groechenig@univie.ac.at}

\author[Irina Shafkulovska\qquad\qquad\qquad ]{Irina Shafkulovska\thankssymb{1}
}
\email{irina.shafkulovska@univie.ac.at}

\thanks{I.~Shafkulovska was funded in part by the Austrian Science Fund (FWF) [\href{https://doi.org/10.55776/P33217}{10.55776/P33217}] and [\href{https://doi.org/10.55776/Y1199}{10.55776/Y1199}]. 
 For open access purposes, the authors have applied a CC BY public copyright license to any
author-accepted manuscript version arising from this submission.}
\allowdisplaybreaks
\belowdisplayshortskip0pt

\maketitle

\begin{abstract}
We develop a method for the transfer of an uncertainty principle for
the short-time Fourier transform or a Fourier pair to  
an uncertainty principle for
a sesquilinear or quadratic metaplectic time-frequency representation. In particular, we derive Beurling-type and Hardy-type uncertainty principles for metaplectic time-frequency representations. 

\noindent \textbf{Keywords.} Uncertainty principle of Benedicks,
Beurling, Hardy, Nazarov, symplectic matrix, metaplectic representation, time-frequency
representation. 

\noindent \textbf{81S07, 81S30, 22E46.}
\end{abstract}
\section{Introduction} 
The \up\  expresses many facets of the fact that a function and its
\ft\ cannot be small simultaneously~\cite{havin-joricke}. 
In its original
version, the \up\ is formulated as an inequality for the Fourier pair
$(f, \hat{f})$, yet in the realm of quantum mechanics and signal
processing one often prefers to use phase-space representations or joint
\tf\ representations. 
For these, the \up\ becomes an  inequality about a joint
\tf\ representation. 
Some early examples are the \up s of  Hardy,
Benedicks-Amrein-Berthier, and of Beurling  for the
\stft\ and the Wigner distribution~\cite{BonamiEtAl2003,Demange2005,GroechenigZimmermann2001,
janssen98,Wilczock98}. 
In a further extension, one may try to formulate
any type of \up\  for any  \tfr. 
This endeavor may easily get out of
hand, unless one restricts the attention to an
interesting and natural class of \tfr s. 
For structural reasons, we will 
focus on  the
 class of metaplectic \tfr s, because they  turn out to be a natural,
 almost  canonical  class of \tfr s and  are completely  characterized  by an
abstract covariance property~\cite{GS25}. 
The class of \mfr s  was recently  introduced  Cordero and
 Rodino~\cite{CorderoEtAl2022} as a  generalization  of the standard \tfr s and
became a powerful tool for the  investigation
of  \psdo s and function
spaces~\cite{CorderoRodino2022,CorderoRodino2023,CorderoGiacchi2024,CorderoEtAl2022,CorderoEtAl2024}
under the name Wigner analysis.

In this article, we study \up s for \mfr s. 
For the formulation of our
results, we start with \tfs s and the symplectic group.   For $(x,\omega)\in\Rd\times\Rd$, let
$\rho(x,\omega) = T_{x/2}M_\omega T_{x/2}  
$ denote the time-frequency shift acting on a function $f:\Rd\to\C$ by 
$$
\rho(x,\omega) f(t) = e^{-\pi i x\cdot \omega}e^{2\pi i \omega\cdot t}f(t-x).
$$
The symplectic group $\Spp\subseteq \R^{2d\times 2d}$ is the matrix group
\begin{equation}\label{eq:Spp_def}
\Spp = \left\lbrace \A\in \R^{2d\times 2d}: \A^t \J \A = \J \right\rbrace,\quad \J = \begin{psmallmatrix}
    0 & I \\ -I & 0
\end{psmallmatrix}.
\end{equation}
By the Stone-von Neumann theorem, for every $\A\in\Spp$,  there exists
a unitary operator $\hat{\A}$ on $\lrd $ satisfying the intertwining relation 
\begin{equation}
    \rho(\A\lambda) = \hat{\A} \, \rho(\lambda) \, \hat{\A} ^{-1},\quad \lambda \in\R^{2d}.
\end{equation} 
Thus $\hat{\A}$ describes how a \tfs\ is transformed under a linear coordinate change.

Now let $\bA \in \Sp$ be a $4d \times 4d $ symplectic
matrix\footnote{Note that we write $4d\times 4d$-matrices with bold letters.}, so that
$\bA $ acts on $\lrdd $. The \mfr s associated to $\bA $ is the
sequilinear form
\begin{equation}
  \label{eq:m1}
  W_{\bA } (f,g) = \hat{\bA } \,  (f\otimes g), \qquad \qquad f,g \in
  \lrd \, .
\end{equation}
The usual \tfr s, such as the \stft , the Wigner distribution, and
many others are special cases of \mfr s.

Our objective is the investigation of \up s for \mfr s $W_{\bA
}$. 
Qualitative \up s search for  decay or integrability  conditions on $W_{\bA
}(f,g)$ that force  $f\equiv 0$ or $g \equiv 0$.  A quantitative \up\
would be an inequality for $W_{\bA }(f,g)$ or force $f$ and $g$ to have a
certain shape. 
We will touch on both types in this paper.

In a first approach~\cite{GroechenigShafkulovska2024}, we studied the
Benedicks-Amrein-Berthier  \up\ for $W_{\bA }$   and developed a general method to study \up s for \mfr s. 
Our goal is to extend the scope of results and to cover several other types of \up s. Our emphasis is on the
method and not so much on the explicit 
results. 
Our main goal is to show that the methods developed in
\cite{GroechenigShafkulovska2024} are versatile enough to answer most questions about \up\ for metaplectic \tfr s. In the end, a reader should be convinced that other \up s can be derived in exactly the same way.

The  point of departure is the observation that the  validity of a particular \up\ depends on the
parametrizing matrix $\bA $.  Not all \mfr s 
fulfill a given type of \up .  The challenge then  is to characterize those \tfr s that obey a given \up . We provide  an  explanation which
\mfr s obey a given type of \up .

We recall that a  symplectic matrix is often
written in block form as $\bA =
\begin{psmallmatrix}
  A & B\\ C & D
\end{psmallmatrix}$ with $2d \times 2d$ blocks  and that every symplectic matrix  $\A \in \Sp $ possesses a
(non-unique) factorization into elementary matrices 
\begin{equation}
  \label{eq:nfactor}
  \bA =
  \begin{psmallmatrix}
I & 0 \\ Q & I    
  \end{psmallmatrix} \, \begin{psmallmatrix}
L & 0 \\ 0 & (L^t)^{-1}    
  \end{psmallmatrix} \, \begin{psmallmatrix}
A & B\\ -B & A    
  \end{psmallmatrix}
\end{equation}
with  $2d\times 2d $ blocks $Q,L,A,B$ such that  $Q=Q^t$, $L $ is invertible, and the
associated complex-valued matrix $U= A+iB$ is unitary.
This so-called pre-Iwazawa decomposition translates into a
factorization of the metaplectic operator $\hat \bA $ and a 
representation of the associated \mfr\ $\W _{\bA }$.

It turns out that
only the third factor $\begin{psmallmatrix}
A & B\\ -B & A    
  \end{psmallmatrix}$ is relevant for the validity of an \up .

 Our main insight about \up s for \mfr s is the following dichotomy depending solely on properties of $U^tU$.

  Let  $\bA \in \Sp$ with a factorization~\eqref{eq:nfactor} and let
  $U = A + iB \in U(2d,\C)$ be the associated unitary $2d\times 2d$
  matrix. 

  \textbf{Alternative  I.} If $U^tU$ is block-diagonal, i.e., there exist
  unitary $d\times d$-matrices $U_1, U_2 \in U(d, \C)$, such that $U^tU =
  \begin{psmallmatrix}
    U_1 &O\\ O & U_2
  \end{psmallmatrix}$, then there exist non-zero functions $f,g\in \lrd $, such that
  $W_{\bA } (f,g)$ has compact support and thus satisfies arbitrary
  decay or integrability conditions. Consequently, the usual types of
  \up s cannot hold for  such a \mfr .

  \textbf{Alternative II.}  If $U^tU$ is not block-diagonal, i.e.,  $U^tU =
  \begin{psmallmatrix}
    U_1 & V_1\\ V_2 & U_2
  \end{psmallmatrix}$ and either $V_1 \neq 0$ or $V_2 \neq 0$, then a meaningful and interesting \up\ can be formulated and proved  for $W_\A $.
      
  We will state explicitly the following types of \up s for a \mfr\  under the stringent \emph{hypothesis  that $U^tU$ is not block-diagonal}. 

\begin{enumerate}[(i)] 
    \item (Benedicks-type): If $\W_\bA(f,g)$ is supported on a set of
      finite measure, then $f\equiv 0$ or $g\equiv 0$. This was our
      main result in~\cite{GroechenigShafkulovska2024}.
    \item[(ii)] (Beurling-type): If $\W_\bA(f,g)$ satisfies
    \begin{equation}\label{eq:Beurling-intro1}
        \int_{\Rdd}|\W_\bA(f,g)(\lambda)|e^{\pi |\lambda\cdot M\lambda|} \, d\lambda <\infty
    \end{equation}
    for an appropriate matrix $\mmm=\mmm(\bA)$ determined by the 
    factorization \eqref{eq:nfactor} of $\bA $,  
    then $f\equiv 0$ or $g\equiv 0$, see Theorem~\ref{thm:beurling_WRu}.
    \item[(iii)] (Hardy-type): 
    Again, for an appropriate matrix $\Omega =\Omega (\bA)$ determined by the 
    factorization \eqref{eq:nfactor} of $\bA $, the condition
    $|\W_\bA (f,g)(\lambda)|\leq c e^{-\alpha \|\Omega \inv
      \lambda\|^2}$ for $\alpha >1$ implies that $f\equiv 0$ or
    $g\equiv 0$. 
    In the critical case $\alpha =1$, both $f$ and $g$ can
    be described explicitly, but their description is more intricate
    than in Hardy's original \up, see Corollary~\ref{cor:hardy_WRu} and Theorem \ref{thm:hardy_partial_quantitative}.
\end{enumerate}

In the second part, we study \up s for the quadratic  metaplectic \tf\
representations $\W_{\bA}(f,f)$ on $\Rdd$. Clearly, if $U^tU$ is not
block-diagonal, then the above results apply and yield $f\equiv
0$. However,  for quadratic \tfr s $\W _{\bA }$  a new
phenomenon emerges. Even if $U^tU=\begin{psmallmatrix}
    U_1 &O\\ O & U_2
  \end{psmallmatrix}$  is block-diagonal, some \up s for $W_{\bA
  }(f,f)$ do hold. A simple reduction leads  us to  investigate the equivalent
  question of  metaplectic pairs
  $(\hat{\A_1} f, \hat{\A_2} f)$ on $\Rd\times \Rd$ in place of a
  Fourier pair $(f, \hat{f})$. This case can be reduced to the
  assumption that $\A _1 = \mathrm{I}_{2d}$ and $\A _2 =
  \begin{psmallmatrix}
    V_1 & V_2 \\ -V_2 & V_1 
  \end{psmallmatrix}$ with $V_2 \neq 0$. Thus the unitary  matrix
  $V=V_1 +i V_2$ is not real-valued. A Beurling-type \up\ for the
  metaplectic pair $(f, \A _2 f)$ asserts that a condition of the form
  \begin{equation}\label{eq:Beurling-intro2}
  \int_{\Rdd} |(f\otimes \hat{\A }_2 f)(\lambda)|e^{ \pi| \lambda
    \cdot M\lambda| }\, d\lambda <\infty ,      
  \end{equation}
implies  $f\equiv 0$. 
Again, $M=M(\bA)$ is an appropriate matrix that comes from the
factorization \eqref{eq:nfactor}. As a corollary we obtain a
Hardy-type \up\ for the metaplectic  pair $(f,\hat{\A }_2 f)$
(Theorem~\ref{thm:hardy_pairs_1}). For a version in dimension $d=1$ we refer to the
recent work~\cite{gosson25}. A thorough analysis in higher
dimensions is contained in ~\cite{corderogm25}. It is  based on a
explicit formula of metaplectic operators
from~\cite{terMorscheOonincx2002O}. Despite similarities of the
results, it is not completely clear how they compare. 

As further examples, we formulate \up s according to {Gelfand-Shilov}
with super-exponential weights for the partial \stft\  
and a version of Nazarov's inequality for
metaplectic 
pairs, see Theorems~\ref{thm:gelfand_shilov_partial_STFT} and \ref{thm:nazarov}.

\textbf{Proof Method.} 
 We would like to emphasize that the above statements are just some
examples of \up s that can be proved. One could go on indefinitely,
but the proofs follow exactly the strategy outlined in
Section~3 and boil down to an exercise in meticulous,
but rather unpleasant book-keeping. In our perspective, the blueprint
for proving \up s in Section~\ref{rem:strategy} is our main contribution. 

The proof method was developed in \cite{GroechenigShafkulovska2024}
and is based on several different factorizations of a symplectic
matrix and the corresponding metaplectic operator. The most important
factorization is the so-called pre-Iwazawa decomposition
\eqref{eq:nfactor}.  An elaborate analysis of this factorization with
additional singular value decompositions yields an important
formula for a  \mfr\ $\W _{\bA }$. Given  to a symplectic matrix $\bA \in
\mathrm{Sp}(4d,\R )$, there exist matrices $\A , \B \in \Spp $ of smaller dimension,
so that $\hat{\A}, \hat{\B}$ act on $\lrd $, and $\Omega \in
\mathrm{GL}(2d,\R )$, such that
        \begin{equation}\label{eq:WRu-reductionb}
            |\W_{\bA}(f,g)| = |\hat{\bD}_{\Omega} V^k_{\hat\B g} \hat\A f|,\qquad f,g\in\ltd.
        \end{equation}
Here $V^k_gf$ is a \emph{partial} \stft , which  is a version of the
\stft\ over a lower-dimensional subspace, see Section~\ref{rem:strategy} for the
precise definition and its analysis.  
The line of argument then proceeds from (known) \up s for the \stft\ to prove analogous \up s for the partial \stft\ $V^k_gf$. The final \up\ for $\W _{\bA }$ then follows by a translation  by means of formula 
\eqref{eq:WRu-reductionb}. 
The technical details are hidden in the factorizations and the measure-theoretic subtleties of the partial \stft , but the proofs of the main \up s (Beurling, Hardy, Gelfand-Shilov) are then straightforward and rather short. 

For the treatment of quadratic \up s for $\W _{\bA } (f,f)$, we will analyze another factorization for unitary matrices $\begin{psmallmatrix}
A & B\\ -B & A    
  \end{psmallmatrix}$ when the block $B$ is invertible. 
  The corresponding metaplectic operators can be written as a tensor product of fractional Fourier transforms (see Lemma~\ref{lem:pair-to-partial} and its proof). In this context, we acknowledge our debt to P.\ Jaming~\cite{Jaming2022} who implicitly used such a factorization in dimension $d=1$ for pairs of fractional Fourier transforms.

\textbf{Outline.} Section~2 collects some facts about the symplectic
group and metaplectic operators. Section~3 presents several \up s for the partial \stft. 
In particular, Section~3.2 outlines the general proof strategy for \up s for \mfr s and is perhaps the most important message of this paper. In
Section~4 we prove \up s according to Hardy, Beurling, and
Gelfand-Shilov for general \mfr s. 
Section~5 focusses on quadratic \mfr s and contains \up s for metaplectic pairs of functions. Section~6 touches quantitative \up s in the style of Hardy and Nazarov. Further remarks in Section~7 and an appendix resuming a decomposition from ~\cite{GroechenigShafkulovska2024} complete the paper. 

\section{Preliminaries}
\subsection{Symplectic matrices and metaplectic operators} 
In this paper, we use the symmetric time-frequency shifts $\rho(x,\omega) = T_{x/2}M_\omega T_{x/2}$, $(x,\omega)\in\Rd\times\Rd$, which act on a function $f:\Rd\to\C$ by~\footnote{We use the symmetric \tf\ shifts for convenience sake; otherwise \eqref{eq:Stone_von-Neumann} would have additional phase factors.}
\begin{equation*}
\rho(x,\omega) f(t) = e^{-\pi i x\cdot \omega}e^{2\pi i \omega\cdot
  t}f(t-x)\, .  
\end{equation*}
The symplectic group $\Spp\subseteq \R^{2d\times 2d}$ is given by
\begin{equation}
\Spp = \left\lbrace \A\in \R^{2d\times 2d}: \A^t \J \A = \J \right\rbrace,\quad \J = \begin{psmallmatrix}
    0 & I \\ -I & 0
\end{psmallmatrix}.
\end{equation}
For every $\A\in\Spp$ there exists a unitary operator $\hat{\A}$ satisfying \cite[Chap.~4.2.]{Folland1989}
\begin{equation}\label{eq:Stone_von-Neumann} 
    \rho(\A\lambda) = \hat{\A} \rho(\lambda) \hat{\A} ^{-1},\quad \lambda \in\R^{2d}.
\end{equation} 
We call $\hat{\A}$ a metaplectic operator associated to $\A$.
Clearly $\A $ is not unique, but the Stone-von Neumann theorem asserts
that $\A $ is unique up to a multiplicative  factor of modulus
$1$. 

The set 
\begin{equation*}
\{ T: T \text{ is a metaplectic operator on } \ltd\}   
\end{equation*}
is a subgroup of the group of unitary operators acting on $\ltd$ and 
contains the metaplectic group $\Mpp$. This group of unitary
operators can be identified with  the twofold cover of $\Spp$, as
a subgroup, i.e.,  the projection
\begin{equation*}
    \pi^{\mathrm{Mp}}:\Mpp\to\Spp
\end{equation*}
is a surjective Lie group homomorphism  with kernel
$\mathrm{ker}(\pi^{\mathrm{Mp}}) =\left\lbrace \mathrm{id},
  -\mathrm{id} \right\rbrace$~\cite{Folland1989,Gosson2011}. Whereas for the construction of the metaplectic representation, the choice of the phase factor is essential, it usually does not matter in \tfa . 

The simplest types of symplectic matrices take the following form:
\begin{equation}\label{eq:Sp_standard_Ex}
\V_Q=\begin{psmallmatrix}
I & 0 \\
Q & I
\end{psmallmatrix},\quad
\D_L=\begin{psmallmatrix}
L & 0 \\
0 & L^{-T}
\end{psmallmatrix},\quad 
\Ro_{A+iB}= \begin{psmallmatrix}
    A & B \\ -B & A
\end{psmallmatrix},
\end{equation}
with $Q=Q^t\in\R^{d\times d}$,
$L\in\mathrm{GL}(d,\R)$~\footnote{$L^{-t} = (L^t)^{-1} = (L^{-1})^t$
  denotes the inverse of the transpose of $L$.}, and $U =
A+iB\in\mathrm{U}(d,\C)$~\footnote{Throughout, we split a unitary
  matrix into a sum of its real and its imaginary part.}. Note that the standard skew-symmetric matrix $\J$ is the orthogonal matrix $\Ro_{iI}$. 

Metaplectic operators can  have a rather complicated representation as an integral operator \cite{terMorscheOonincx2002O}. 
However, a significant simplification  is due to the fact that for any symplectic matrices $\A_1,\A_2\in\Spp$ and any particular choice of metaplectic operators $\hat{\A_1},\hat{\A_2}\in\Mpp$, there exists a phase  $c\in\T$ such that
\begin{equation}\label{eq:meta_decomp_std}
   \hat{\A_1 \A_2} = c\cdot   \hat{\A_1} \hat{\A_2}.
\end{equation}
Thus, to determine a metaplectic operator $\hat\A$ up to a phase factor, it suffices to factor a matrix $\A$ into elementary matrices $\A_1,\dots, \A_n$ with simple associated metaplectic operators $\hat{\A_1},\dots,\hat{\A_n}$.

The fundamental metaplectic operators are the Fourier transform $\F = \hat{\J}$, the linear "chirps" $\hat{\V_Q}$, and the linear coordinate changes $\hat{\D_L}$;
\begin{equation}\label{eq:standard_metaplectic}
    \begin{split}
        \hat{\D_L} f(x) &= \abs{\det{L}}^{-1/2} f(L^{-1}x),\qquad
    \hat{\V_Q} f(x)  = e^{\pi i x^t  Qx}f(x).
    \end{split}
  \end{equation}
Thus $\hat{\D_L}$ is just a linear change of coordinates and the
multiplication operator $\hat{\V _Q}$ satisfies $|\hat{\V_Q} f|  =
|f|$. When it comes to \up s, these operators are harmless, and we can
usually drop them. 

The matrices $\J$, $\V_Q$, $Q = Q^t\in\R^{d\times d}$,
and $\D_L$, $L\in\mathrm{GL}(d,\R)$, generate the symplectic group~\cite{Folland1989,Gosson2011},
thus any metaplectic operator can be factored, up to a phase, as a
(finite) product of $\F$, $\hat{\V_Q}$, and 
$\hat{\D_L}$ with  $Q = Q^t\in\R^{d\times d}$ and
$L\in\mathrm{GL}(d,\R)$. 
We emphasize that the phase factor in \eqref{eq:meta_decomp_std} is irrelevant because in the formulation of \up s only absolute values $|\hat\A f|$ arise. 
We will therefore omit $c$ whenever we apply \eqref{eq:meta_decomp_std}.

In addition to the factorization of a symplectic matrix  into the
elementary matrices  $\J,\V_Q,\D_L$, we need another decomposition of
$\Spp $, 
called the \emph{pre-Iwasawa decomposition} in~\cite{Gosson2011}.
  
\begin{proposition}[Pre-Iwasawa decomposition]\label{prop:pre-iwasawa}
      Let $\A\in\Spp$. Then there exist $Q=Q^t\in\R^{d\times d}$, $L\in\mathrm{GL}(d,\R)$ and $U\in\mathrm{U}(d,\C)$ such that
      \begin{equation}\label{eq:n3}
          \A=\V_Q\D_L\Ro_U.
      \end{equation}
  \end{proposition}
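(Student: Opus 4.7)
The plan is to reverse-engineer the factorization by expanding the product $\V_Q \D_L \Ro_U$ and matching it block by block to $\A = \begin{psmallmatrix} a & b \\ c & d \end{psmallmatrix}$ (with $d \times d$ blocks). A direct multiplication gives
\begin{equation*}
\V_Q \D_L \Ro_U = \begin{pmatrix} LA & LB \\ QLA - L^{-t}B & QLB + L^{-t}A \end{pmatrix},
\end{equation*}
so matching the top block row enforces $a = LA$, $b = LB$, and the anticipated unitarity of $U = A + iB$ in the form $AA^t + BB^t = I$ then dictates $LL^t = aa^t + bb^t$.

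The first crucial step is to verify that $aa^t + bb^t$ is positive definite, so that $L := (aa^t + bb^t)^{1/2}$ (the unique symmetric positive-definite square root) is well-defined. This reduces to showing that the $d \times 2d$ matrix $[a \mid b]$ has rank $d$: if $v^t a = v^t b = 0$ for some $v \in \R^d$, then the row vector $(v^t, 0)\A$ vanishes, contradicting the invertibility of $\A$. With $L$ in hand I set $A := L^{-1} a$, $B := L^{-1} b$. Unitarity of $U := A + iB$ then reduces to the two conditions $AA^t + BB^t = I$ (immediate from the definition of $L$) and $AB^t = BA^t$, the latter equivalent to the symplectic identity $ab^t = ba^t$ coming from $\A \J \A^t = \J$.

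It remains to identify $Q$ as the lower-left block of $\V_Q := \A(\D_L \Ro_U)^{-1}$ and to check that this product has the required shape. Using $\Ro_U^{-1} = \Ro_{U^{-1}}$ together with $L^t = L$, one computes
\begin{equation*}
\A(\D_L \Ro_U)^{-1} = \begin{pmatrix} (aa^t + bb^t)L^{-2} & -ab^t + ba^t \\ (ca^t + db^t)L^{-2} & da^t - cb^t \end{pmatrix},
\end{equation*}
whose $(1,1)$-block is $I$ by construction of $L$, whose $(1,2)$-block vanishes by $ab^t = ba^t$, and whose $(2,2)$-block equals $I$ by the symplectic identity $da^t - cb^t = I$ (the transpose of $ad^t - bc^t = I$ read off from $\A \J \A^t = \J$). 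This identifies $Q := (ca^t + db^t)L^{-2}$ and thus gives $\A = \V_Q \D_L \Ro_U$. Symmetry $Q = Q^t$ then follows without any further calculation: $\V_Q$ is a product of symplectic matrices, hence symplectic, and applying the relation $\V_Q^t \J \V_Q = \J$ to any matrix of the form $\begin{psmallmatrix} I & 0 \\ Q & I \end{psmallmatrix}$ immediately forces $Q - Q^t = 0$. The main obstacle I anticipate is not conceptual but notational: the two equivalent forms $\A^t \J \A = \J$ and $\A \J \A^t = \J$ produce several superficially different block identities among $a, b, c, d$, and some care is needed to invoke exactly the right one at each step.
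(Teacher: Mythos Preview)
Your proof is correct. The paper does not actually prove this proposition: it states the result and cites \cite[Prop.~250]{Gosson2011}, noting afterwards that with the normalization $L=L^t>0$ the factors $Q,L,U$ are uniquely determined by explicit formulas. Your argument supplies exactly such a proof, and indeed your choice $L=(aa^t+bb^t)^{1/2}$ is the symmetric positive-definite normalization the paper alludes to, so you also recover the explicit formulas $A=L^{-1}a$, $B=L^{-1}b$, $Q=(ca^t+db^t)L^{-2}$.

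A minor presentational point: you should state explicitly that $\A\in\Spp$ implies both $\A^t\J\A=\J$ and $\A\J\A^t=\J$ (the second follows from the first since $\A^{-1}=-\J\A^t\J$), as you draw on identities from both forms (namely $ab^t=ba^t$ and $ad^t-bc^t=I$ come from $\A\J\A^t=\J$). You already flag this at the end as the main bookkeeping hazard; just make the justification visible in the write-up.
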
 
  Since for an orthogonal matrix $V\in O(d,\R)$  we have the identity
  $$
  \D _V = \begin{psmallmatrix}
V & 0 \\
0 & V
\end{psmallmatrix}  = \Ro _{V+iO} \, ,
  $$
the pre-Iwazawa decomposition cannot be unique. 
To achieve uniqueness,
one must impose the additional condition $L=L^t>0$, in which case there
are explicit formulas for the constituent matrices $Q,L,U $ in
\eqref{eq:n3}, see~\cite[Prop.~250.]{Gosson2011}.

We note that each  mapping
\begin{equation}\label{eq:index_iso}
    Q\mapsto \V_Q,\qquad L\mapsto \D_L,\qquad U\mapsto \Ro_U,
\end{equation}
is group a isomorphism on the space of symmetric matrices with
addition, and on $\mathrm{GL}(d,\R)$ and on $\mathrm{U}(d,\C)$ with
multiplication. 
In addition, we  observe that  \footnote{Note that $\mathrm{O}(d,\R)=\mathrm{U}(d,\C)\cap \mathrm{GL}(d,\R)$.}
\begin{equation}\label{eq:DVRoV}
    \Ro_V = \D_V,\quad \Ro_{WUV} = \D_W \Ro_U \D_V,\qquad W,V\in\mathrm{O}(d,\R), U\in\mathrm{U}(d,\C).
  \end{equation}
  
\subsection{Tensors} 
Recall that $\ltd$ can be identified with any of the (Hilbert) tensor
products 
$$
L^2(\R^{d_1})\otimes \dots\otimes L^2(\R^{d_n}),\qquad \sum\limits_{j=1}^n d_j =d.
$$
The elementary tensors are the functions
\begin{equation}
    \bigotimes\limits_{j=1}^n f_j (x) = \prod\limits_{j=1} f_j(x_j),\qquad x_j\in\R^{d_j},\, f_j\in L^2(\R^{d_j}),\ 1\leq j\leq n.
\end{equation}
Most of the time, $n=2$ or $n=d$.
Throughout, we will also use the tensor product $T\otimes S$ of operators defined
on elementary tensors by 
\begin{equation}\label{eq:tensor_action_def}
    (T\otimes S) (f\otimes g) = Tf\otimes Sg,\qquad f\in L^2(\Rk),\ g\in L^2(\R^{d-k}).
\end{equation}
Using \eqref{eq:DVRoV} and the tensor notation introduced above, we can embed $\mathrm{Mp}(2k,\R)\times \mathrm{Mp}(2d-2k,\R)$ in $\Mpp$ by embedding the generators:
\begin{equation}\label{eq:op_embed}
    \hat{\V}_{\diag(Q_1,Q_2)} = \hat{\V}_{Q_1}\otimes \hat{\V}_{Q_2},\ \ \hat{\D}_{\diag(L_1,L_2)} = \hat{\D}_{L_1}\otimes \hat{\D}_{L_2},\ \ \hat{\Ro}_{\diag(U_1,U_2)} = \hat{\Ro}_{U_1}\otimes \hat{\Ro}_{U_2}.
\end{equation}
Note that \eqref{eq:op_embed} holds for a \emph{particular} choice of the generators, for instance, as chosen in \eqref{eq:standard_metaplectic}. 
Otherwise, there is a multiplicative constant to be taken into account. 
As we are interested in $|\hat\A f|$, this will not represent a problem later on.

\section{Qualitative uncertainty principles for the partial \stft}\label{sec:3}
In this section, we introduce the \emph{partial \stft}. 
We discuss qualitative \up s for partial \stft s  and explain why
these are the correct stepping stone to \up s for  general  metaplectic
\tf\ representations. 

\subsection{The partial \stft }
Given a function $f\in\ltd$ and writing $x=(x_1,x_2)$ for $x_1\in\R^k$ and $x_2\in\R^{d-k}$ 
for some $1\leq k< d$, we denote with $f_{x_2}$ the restriction~\footnote{If $k=d$, then we interpret $f_{x_2} $ as $f$.}
\begin{equation}
    f_{x_2}:\R^k\to\C,\quad x_1\mapsto f(x_1,x_2).
\end{equation}
By Fubini's theorem, the function
$f_{x_2}$ is square-integrable for almost all $x_2\in \R^{d-k}$. This fact allows us to define the partial short-time Fourier transform in $k$ variables almost everywhere.
\begin{definition}[Partial \stft] \label{def:partial_stft}
    Let  $f,g\in\ltd$ and let $k\in\{1,\dots, d\}$. 
    The partial short-time Fourier transform in the first $k$ variables of a function $f$ with respect to $g$ is defined as 
    \begin{equation}\label{eq:def_partial_STFT}
        \begin{split}
            V^k_{g}f(x_1,x_2, \omega_1,\omega_2) & = \int_{\R^k} f(t,x_2) \overline{g(t-x_1, -\omega_2)} e^{-2\pi i t\cdot \omega_1 }\, dt \\
            & = \langle 
            f_{x_2},
            M_{\omega_1} T_{x_1} 
            g_{-\omega_2}
            \rangle 
            = V_{
            g_{-\omega_2}
            } 
            f_{x_2}
            (x_1,\omega_1),
        \end{split}
    \end{equation}
    for $(x_1,x_2,\omega_1,\omega_2) \in \R^{k}\times \R^{d-k} \times \R^{k}\times \R^{d-k}$.
  \end{definition}
  If $k=d$, then the definition returns the usual \stft\ $V_gf$ on
  $\rdd $.
  
Similar to the full  \stft, $V_g^k f$ is a special case of a metaplectic \tf\ representation \cite{GroechenigShafkulovska2024}. 
By Fubini's theorem, for almost all $x_2,\omega_2\in\R^{d-k}$, $V_{g_{-\omega_2}} f_{x_2}(x_1,\omega_1)$ is a \stft\ of two square-integrable functions in $L^2(\R^k)$, hence all known uncertainty principles for the \stft \ are applicable to these restrictions. 

We first state a standard result in measure theory. 
\begin{lemma}\label{lem:measure_theory}
    Let $f,g\in\ltd$, $1\leq k\leq d$. Then the following statements hold.
    \begin{enumerate}[(i)]
        \item If the restriction $f_{x_2}$ is the zero function for almost all $x_2\in\R^{d-k}$, then $f\equiv 0$.
        \item Assume that $k<d$. If $f_{x_2}\otimes g_{\omega_2}\equiv 0$ for almost all $(x_2,\omega_2)\in\R^{2(d-k)}$, then $f\equiv 0$ or $g\equiv 0$. 
        \item If $V_g^k f\equiv 0$ almost everywhere, then $f\equiv 0$ of $g\equiv 0$ almost everywhere. 
    \end{enumerate}
    
\end{lemma}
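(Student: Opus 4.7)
The three items reduce to Fubini--Tonelli combined, for (iii), with the classical $L^2$-injectivity of the full \stft. Item (i) is immediate: the hypothesis says $\int_{\R^k}|f(x_1,x_2)|^2\,dx_1=0$ for a.e. $x_2$, and integrating against $dx_2$ yields $\|f\|_{L^2(\R^d)}^2=0$.

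For (ii), the tensor-product norm identity $\|h_1\otimes h_2\|_{L^2(\R^{2k})}=\|h_1\|_2\,\|h_2\|_2$ converts the hypothesis into: for a.e. $(x_2,\omega_2)\in\R^{2(d-k)}$ one has $f_{x_2}\equiv 0$ or $g_{\omega_2}\equiv 0$. I would set
\begin{equation*}
E=\{x_2\in\R^{d-k}:\|f_{x_2}\|_2>0\},\qquad F=\{\omega_2\in\R^{d-k}:\|g_{\omega_2}\|_2>0\},
\end{equation*}
both measurable. The hypothesis forces $E\times F$ to be a Lebesgue null set in $\R^{2(d-k)}$, and Tonelli gives $|E|\cdot|F|=0$. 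If $|E|=0$, then $f_{x_2}\equiv 0$ for a.e. $x_2$ and (i) yields $f\equiv 0$; the case $|F|=0$ is symmetric and yields $g\equiv 0$.

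For (iii), the identity in Definition~\ref{def:partial_stft} reads $V_g^k f(x_1,x_2,\omega_1,\omega_2)=V_{g_{-\omega_2}}f_{x_2}(x_1,\omega_1)$. Applying Fubini to the null set $\{V_g^k f\ne 0\}$ shows that for a.e. $(x_2,\omega_2)$ the ordinary \stft\ $V_{g_{-\omega_2}}f_{x_2}$ vanishes a.e. on $\R^{2k}$. The classical orthogonality relation $\|V_\psi\varphi\|_2=\|\psi\|_2\,\|\varphi\|_2$ then forces $f_{x_2}\equiv 0$ or $g_{-\omega_2}\equiv 0$. For $k<d$, the substitution $\omega_2\mapsto -\omega_2$ reduces this to the hypothesis of (ii) and the conclusion follows; for $k=d$ there are no spectator variables and the scalar injectivity alone suffices.

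The only step requiring a moment of thought is the measure-theoretic dichotomy in (ii): a Lebesgue null set in a product $\R^{d-k}\times\R^{d-k}$ cannot meet both factors in sets of positive measure, which is precisely what forces the alternative between $f\equiv 0$ and $g\equiv 0$. Everything else is routine book-keeping with Fubini--Tonelli.
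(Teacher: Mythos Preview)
Your proof is correct and follows essentially the same route the paper indicates: (i) is Fubini, (ii) is the tensor-norm/product-measure argument (the paper phrases this as ``permuting the order of the variables'' to reduce to $f\otimes g\equiv 0$ and then applying (i)), and (iii) uses the orthogonality relation $\|V_\psi\varphi\|_2=\|\psi\|_2\|\varphi\|_2$ for the full \stft\ on the restrictions and feeds the result into (ii). The paper does not spell out the details but points to Fubini and to \cite[Lem.~3.2]{GroechenigShafkulovska2024}; your write-up is a faithful expansion of that sketch.
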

 A self-contained proof of (iii)  can be found in
 \cite[Lem.~3.2.]{GroechenigShafkulovska2024}, (i) follows from
 Fubini's theorem, and (ii) follows from (i) after permuting the order
 of the variables. 

\subsection{The proof strategy.} \label{rem:strategy}     The primary reason we are interested in the partial \stft\ is the following proof {approach} that we already exploited in \cite{GroechenigShafkulovska2024} to prove a Benedicks-type uncertainty principle for $\W_{\bA} (f,g)$.
\begin{enumerate} 
    \item Factorization of $\A$: Compute a pre-Iwasawa decomposition $\bA = \bV_Q \bD_L \bRo_U$.
    \item Representation  of $W_\A$:  If $U^tU$ is not $d\times d$ block-diagonal, then there exists a $1\leq k\leq d$ and matrices $\A,\B\in\Spp$, $\Omega\in\mathrm{GL}(2d,\R)$, such that \cite[Thm.~1.1., Thm.~3.4.]{GroechenigShafkulovska2024}
    \begin{equation}\label{eq:free-to-stft}
        |\W_{\bA}(f,g)| = |\hat{\bA}(f\otimes \overline{g})|
         = | \hat{\bD}_{\Omega} (V^k_{\hat{\B} g}\hat{\A} f)|.
    \end{equation}
By setting   $\tilde f = \hat{\A} f$ and $\tilde g = \hat{\B} g$, an
\up\ for $\W_\A (f,g)$ is equivalent to an \up\ for
$V^k_{\tilde{g}}\tilde{f}$. 
    \item  Uncertainty principle for the partial \stft:  Show that
      the restriction $V^k_{\tilde g}\tilde f(\cdot , x_2,\cdot , \omega_2) 
    = V_{ \tilde{g}_{-\omega_2} } 
            \tilde{f}_{x_2}$
    satisfies the \up\  for almost all $(x_2,\omega _2)$  and
    conclude that $V^k_{\tilde{g}}\tilde{f} =0$. 
   \item Uncertainty principle for $W_\A $: Conclude  $ f=
     \hat{\A}^{-1} \tilde f\equiv 0$ or  $ g= \hat{\B}^{-1} \tilde
     g\equiv 0$. 
\end{enumerate}
We consider the proof outline with Steps (i) --- (iv) our  main
contribution to understand \up s for \mfr s. 
In this strategy, an \up\ for the partial \stft\ is an intermediate
result  towards an \up\ for a \mfr. Technically, we apply a known
\up\ for the full \stft\ to the restrictions $ V_{ \tilde{g}_{-\omega_2} } 
        \tilde{f}_{x_2}$ and then conclude that $V^k_gf \equiv 0$ and apply Lemma~\ref{lem:measure_theory}. The ensuing \up\ for $W_\bA $ is then a matter of diligent book-keeping, however, in view of \eqref{eq:free-to-stft}, its final formulation may look very ugly. 
            
        In the following sections, we execute this plan for the \up s of Hardy, Beurling, and Gelfand-Shilov. 

\subsection{Beurling-type} 
Beurling's \up\ is considered the strongest \up\ because the classical \up s\ such as Hardy's, Benedicks's, and Gelfand-Shilov's \up \ are easy consequences. 
The following version of Beurling's \up\ for the \stft\ was derived by Demange \cite[Thm.~1.2.]{Demange2005}.
\begin{theorem}\label{thm:beurling_STFT}
    Let $f,g\in\ltd$ and $N\geq 0$. 
    \begin{enumerate}[(i)] 
        \item If the weighted integral satisfies 
    \begin{equation}\label{eq:beurling_STFT}
        \int_\Rd \int_\Rd \frac{|V_g
          f(x,\omega)|}{(1+\norm{x}+\norm{\omega})^N}e^{\pi |x\cdot
          \omega|}\, dx d\omega <\infty \, ,
    \end{equation}
    then either $V_gf \equiv 0$ and thus  $f\equiv 0$ or $g\equiv 0$,
    or both can be written as 
    \begin{equation}
        f(x) = p_1(x)e^{-Ax\cdot x - 2\pi i \omega_0\cdot x},\quad 
        g(x) = p_2(x)e^{-Ax\cdot x- 2\pi i \omega_0\cdot x},\
    \end{equation}
    with  polynomials $p_1, p_2$ whose degrees satisfy 
    $ deg(p_1) + deg(p_2) < N-d$, $\omega_0\in\Cd$, and with  a real
    positive-definite  matrix $A$. 
    \item Qualitative version: In particular, if $N\leq d$ in \eqref{eq:beurling_STFT}, then $f\equiv 0$ or $g\equiv 0$.
    \end{enumerate}
\end{theorem}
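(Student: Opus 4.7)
The plan is to reduce the STFT Beurling inequality to the polynomial Beurling theorem of Bonami--Demange--Jaming for Fourier pairs on $\R^{2d}$. The central ingredient is that $V_g f$ lies in $L^2(\R^{2d})$ and has a very explicit Fourier transform: a direct double-integral computation yields
\begin{equation*}
\widehat{V_g f}(\xi,\eta) = e^{2\pi i\,\xi\cdot\eta}\, f(-\eta)\,\overline{\hat g(\xi)},
\end{equation*}
so $|\widehat{V_g f}(\xi,\eta)| = |f(-\eta)|\,|\hat g(\xi)|$. Thus the STFT furnishes a genuine Fourier pair on $\R^{2d}$ amenable to Beurling-type analysis.

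Next I would manipulate condition~\eqref{eq:beurling_STFT} into the form required by Bonami--Demange--Jaming in $2d$ variables, namely a finite integral of $|F(z)||\hat F(\zeta)|(1+\|z\|+\|\zeta\|)^{-N'}e^{2\pi|z\cdot\zeta|}$. Two reconciliations are needed. First, the weight $e^{\pi|x\cdot\omega|}$ must be compared with $e^{2\pi|z\cdot\zeta|}$; a suitable symplectic-orthogonal change of coordinates on $\R^{2d}$ turns $x\cdot\omega$ into half of a bilinear pairing of the required shape. Second, the single factor $|V_g f|$ in~\eqref{eq:beurling_STFT} must be upgraded to a product $|F||\hat F|$: combining~\eqref{eq:beurling_STFT} with its symmetric companion obtained from the duality $|V_g f(x,\omega)|=|V_{\hat g}\hat f(\omega,-x)|$ (or a Cauchy--Schwarz comparison against a Gaussian reference STFT) achieves this. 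Applying the higher-dimensional Bonami--Demange--Jaming theorem then forces
\begin{equation*}
V_g f(x,\omega) = P(x,\omega)\,e^{-\pi\langle\tilde A(x,\omega),(x,\omega)\rangle - 2\pi i\, w_0\cdot(x,\omega)}
\end{equation*}
for a polynomial $P$ on $\R^{2d}$ with $\deg P<N-d$, a positive-definite real $2d\times 2d$ matrix $\tilde A$, and $w_0\in\C^{2d}$.

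The last step descends from $V_g f$ back to $f$ and $g$ individually. A Hudson--Janssen-type characterization of polynomial-times-Gaussian STFTs shows that if $V_g f$ has the above form, then both $f$ and $g$ must be of the form $p_j(x)e^{-A x\cdot x - 2\pi i \omega_0\cdot x}$ with a common positive-definite $A\in\R^{d\times d}$ and shared $\omega_0\in\Cd$, and the component polynomials satisfy $\deg p_1+\deg p_2\le \deg P$. Part~(ii) then follows because $N\le d$ forces $\deg P<0$, excluding any nonzero $p_j$. The hardest part is the careful bookkeeping of polynomial degrees through the dimension doubling $\R^d\to\R^{2d}$, the verification that the change of coordinates preserves the polynomial weight $(1+\|\cdot\|)^{-N}$ up to harmless constants, and the extraction of the complex vector $\omega_0\in\Cd$ from the $\R^{2d}$-valued $w_0$, which reflects that the Gaussian factor is allowed to carry a pure-imaginary linear term.
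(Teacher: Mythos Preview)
The paper does not prove this theorem at all; it quotes it from Demange \cite[Thm.~1.2.]{Demange2005} and uses it as a black box. So there is no in-paper proof to compare against, and your task was really to reproduce Demange's argument.

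Your sketch has a genuine gap at the ``upgrade to a product $|F||\hat F|$'' step. The Bonami--Demange--Jaming theorem for a function $F$ on $\R^{2d}$ requires a $4d$-dimensional integral
\[
\int_{\R^{2d}}\int_{\R^{2d}} \frac{|F(z)||\hat F(\zeta)|}{(1+\|z\|+\|\zeta\|)^{N'}}\,e^{2\pi|z\cdot\zeta|}\,dz\,d\zeta<\infty,
\]
whereas \eqref{eq:beurling_STFT} is only a $2d$-dimensional integral of $|V_g f|$ alone. The duality $|V_g f(x,\omega)|=|V_{\hat g}\hat f(\omega,-x)|$ merely expresses a symmetry of one function under a coordinate flip; it does not manufacture a product $|F(z)||\hat F(\zeta)|$ over \emph{independent} variables $z,\zeta\in\R^{2d}$. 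Your symplectic-orthogonal change of coordinates lives inside $\R^{2d}$ and cannot supply the missing $2d$ integration variables. The ``Cauchy--Schwarz against a Gaussian reference'' suggestion is too vague to repair this: one would need a pointwise bound on $|\hat F(\zeta)|=|f(-\eta)||\hat g(\xi)|$ strong enough to make the $4d$-integral converge, and nothing in the hypothesis delivers that directly.

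Demange's actual proof proceeds by a direct complex-analytic argument on the ambiguity function, not by reduction to a Fourier-pair Beurling theorem in doubled dimension. The weighted $L^1$ hypothesis controls the growth of the holomorphic extension of $V_g f$ to $\C^{2d}$, and Phragm\'en--Lindel\"of\,/\,Liouville-type reasoning forces $V_g f$ to be a polynomial times a Gaussian. Your final step (descending from a polynomial-Gaussian $V_g f$ to polynomial-Gaussian $f$ and $g$ with a common quadratic part and matching degree bound) is correct in spirit and is indeed how Demange finishes.
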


The partial \stft \ satisfies the same type of an \up. 
We state a qualitative version and refer to Sections \ref{sec:quantitative} and \ref{sec:generalizations} for further discussion.
\begin{theorem}\label{thm:beurling_partial_STFT}
     Let $1\leq k \leq d$, $0\leq N\leq k$ and $f,g\in\ltd$. If
    \begin{equation}\label{eq:beurling_STFT_partial_L1}
        \int_{\R^{2d}} \frac{|V^k_g f(x,\omega)|}{(1+\norm{x}+\norm{\omega})^N}e^{\pi |x\cdot \omega|}\, dx d\omega <\infty,
    \end{equation}
    then $f\equiv 0$ or $g\equiv 0$.
\end{theorem}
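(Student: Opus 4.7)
The plan is to execute the strategy of Section \ref{rem:strategy} at the level of the full STFT: fix the last $d-k$ coordinates, apply Theorem \ref{thm:beurling_STFT}(ii) in dimension $k$ to each slice, and conclude via Lemma \ref{lem:measure_theory}. The case $k=d$ reduces directly to Theorem \ref{thm:beurling_STFT}(ii), so I would focus on $k<d$. Writing $x=(x_1,x_2)$, $\omega=(\omega_1,\omega_2)$ with $x_1,\omega_1\in\R^k$ and $x_2,\omega_2\in\R^{d-k}$, Definition \ref{def:partial_stft} gives $V_g^k f(x,\omega)=V_{g_{-\omega_2}}f_{x_2}(x_1,\omega_1)$, and Fubini guarantees that the slices $f_{x_2},g_{-\omega_2}$ lie in $L^2(\R^k)$ for almost every $(x_2,\omega_2)$.

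The key technical step is weight bookkeeping. From $x\cdot\omega=x_1\cdot\omega_1+x_2\cdot\omega_2$ one has $|x_1\cdot\omega_1|\le |x\cdot\omega|+|x_2\cdot\omega_2|$, hence $e^{\pi|x_1\cdot\omega_1|}\le e^{\pi|x_2\cdot\omega_2|}\,e^{\pi|x\cdot\omega|}$. The elementary submultiplicativity $(1+\norm{x_2}+\norm{\omega_2})(1+\norm{x_1}+\norm{\omega_1})\ge 1+\norm{x}+\norm{\omega}$ likewise gives $(1+\norm{x_1}+\norm{\omega_1})^{-N}\le (1+\norm{x_2}+\norm{\omega_2})^N(1+\norm{x}+\norm{\omega})^{-N}$ for $N\ge 0$. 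Combining these bounds and applying Fubini to \eqref{eq:beurling_STFT_partial_L1} will show that, for almost every $(x_2,\omega_2)\in\R^{2(d-k)}$, the $k$-dimensional Beurling integral
$$\int_{\R^{2k}}\frac{|V_{g_{-\omega_2}}f_{x_2}(x_1,\omega_1)|}{(1+\norm{x_1}+\norm{\omega_1})^N}e^{\pi|x_1\cdot\omega_1|}\,dx_1\,d\omega_1$$
is finite.

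Since $N\le k$, Theorem \ref{thm:beurling_STFT}(ii) applied in dimension $k$ then forces $f_{x_2}\otimes g_{-\omega_2}\equiv 0$ for almost every $(x_2,\omega_2)$, and Lemma \ref{lem:measure_theory}(ii) converts this into the desired dichotomy $f\equiv 0$ or $g\equiv 0$.

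The main obstacle is the non-product nature of the exponential weight $e^{\pi|x\cdot\omega|}$: it couples all $2d$ coordinates through a single absolute value, so the slicing rests on the triangle-inequality trick above rather than on a direct product decomposition. The prefactor $(1+\norm{x_2}+\norm{\omega_2})^N e^{\pi|x_2\cdot\omega_2|}$ produced by this splitting is merely measurable in $(x_2,\omega_2)$ and grows in both variables, but this is harmless because the slicewise conclusion only asks for finiteness almost everywhere. It is precisely this step that explains why the admissible range of polynomial loss is $N\le k$ (matching the dimension of the inner STFT) rather than $N\le d$.
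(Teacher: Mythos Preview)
Your proposal is correct and follows essentially the same route as the paper: the identical weight estimate \eqref{eq:n5} together with Fubini reduces to the $k$-dimensional Beurling theorem on almost every slice. The only cosmetic difference is that you finish via Lemma~\ref{lem:measure_theory}(ii) (tensor of slices vanishes) whereas the paper phrases the conclusion as $V_g^k f\equiv 0$ and invokes Lemma~\ref{lem:measure_theory}(iii); the two are equivalent.
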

\begin{proof} 
If $k=d$, then this is  Theorem \ref{thm:beurling_STFT} (ii). 
Thus we may assume $k<d$. 
Let $(x_1,\omega_1)\in\R^{2k}$ and $(x_2,\omega_2)\in\R^{2(d-k)}$. 
Fubini's theorem and the estimate 
\begin{equation}\label{eq:n5}
    \begin{split}
        \frac{1}{(1+\norm{x_1}+\norm{\omega_1})^N}\, e^{\pi|x_1\cdot \omega_1|}
        & \leq \frac{ e^{\pi|x_2\cdot \omega_2|}
          (1+\norm{x_2}+\norm{\omega_2})^N}{(1+\norm{x}+\norm{\omega})^N}\,
          e^{\pi|x_1\cdot \omega_1 +x_2\cdot \omega_2|}\\
        & = \frac{ e^{\pi|x_2\cdot \omega_2|}
          (1+\norm{x_2}+\norm{\omega_2})^N}{(1+\norm{x}+\norm{\omega})^N}\,
          e^{\pi|x\cdot \omega|} 
    \end{split}
\end{equation}
show that \eqref{eq:beurling_STFT_partial_L1} implies
    \begin{equation}\label{eq:beurling_STFT_partial}
        \int_{\R^{2k}} \frac{|V^k_g f(x_1,x_2,\omega_1,\omega_2)|}{(1+\norm{x_1}+\norm{\omega_1})^N}e^{\pi |x_1\cdot \omega_1|}\, dx_1 d\omega_1 <\infty
        \quad \text{for a.a. }(x_2,\omega_2)\in\R^{2(d-k)}.
    \end{equation}
Since $V^k_g f(x_1,x_2,\omega_1,\omega_2 ) =
V_{g_{-\omega_2}}f_{x_2}$, \eqref{eq:beurling_STFT_partial}  implies
that the condition \eqref{eq:beurling_STFT} is satisfied for almost
all $(x_2,\omega _2) \in \R ^{2(d-k)}$. By Theorem
\ref{thm:beurling_STFT},  $V_{g_{-\omega_2}}f_{x_2} \equiv 0$ for all
$(x_2,\omega_2)\in \R^{2(d-k)}\setminus S$.  
Consequently $V^k_gf (x_1,x_2,\omega _1,\omega _2) =0$ almost everywhere. 
From  Lemma \ref{lem:measure_theory}(iii)  we obtain that $f\equiv 0$ or $g\equiv 0$, as claimed.
\end{proof}
Note that both conditions  \eqref{eq:beurling_STFT_partial_L1} and
\eqref{eq:beurling_STFT_partial} imply $f\equiv 0$ or $g\equiv
0$. Condition \eqref{eq:beurling_STFT_partial_L1} treats the variables
$(x_1,\omega _2)$ and $(x_2,\omega _2)$ equally, whereas
\eqref{eq:beurling_STFT_partial} is asymmetric, but is weaker. Similar
observations can be made for the other \up s, see Section  
\ref{sec:generalizations}.

\subsection{Hardy-type}
As is well known for the Fourier pair $(f,\hat{f})$,  Beurling's
\up\ implies Hardy's \up. 
Similarly, Theorem \ref{thm:beurling_partial_STFT} implies a Hardy-type \up\ for the partial \stft. 
\begin{theorem}\label{thm:hardy_partial_1}
Let $1\leq k \leq d$, $\alpha,\beta,c>0$ with $\alpha\beta>1$.
If $f,g\in\ltd$ satisfy 
    \begin{equation}\label{eq:hardy_STFT_partial1}
    |V^k_g f(x,\omega)| \leq c e^{-\pi  (\alpha\norm{x}^2 +
      \beta\norm{\omega}^2)/2}\quad \text{ for a.a. }(x,\omega)\in\R^{2d},
    \end{equation}
    then $f\equiv 0$ or $g\equiv 0$. 
\end{theorem}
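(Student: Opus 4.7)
My strategy is to reduce Theorem~\ref{thm:hardy_partial_1} directly to the Beurling-type uncertainty principle for the partial \stft, Theorem~\ref{thm:beurling_partial_STFT}, with the exponent $N=0$. The Gaussian-decay hypothesis already implies the $L^1$-integrability condition \eqref{eq:beurling_STFT_partial_L1} via a weighted arithmetic--geometric mean inequality, so no additional work at the level of the fibers $V_{g_{-\omega_2}}f_{x_2}$ is needed.

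The sole estimate is the following. From AM--GM,
$$
2\sqrt{\alpha\beta}\,\norm{x}\,\norm{\omega} \le \alpha\norm{x}^2+\beta\norm{\omega}^2,
$$
so $|x\cdot\omega|\le \norm{x}\,\norm{\omega}\le \tfrac{1}{2\sqrt{\alpha\beta}}\bigl(\alpha\norm{x}^2+\beta\norm{\omega}^2\bigr)$. Combining this with the hypothesis~\eqref{eq:hardy_STFT_partial1}, I would obtain
$$
|V^k_g f(x,\omega)|\,e^{\pi|x\cdot\omega|}
\;\le\; c\,\exp\!\Bigl(-\tfrac{\pi}{2}\bigl(1-(\alpha\beta)^{-1/2}\bigr)\bigl(\alpha\norm{x}^2+\beta\norm{\omega}^2\bigr)\Bigr).
$$
Since $\alpha\beta>1$, the factor $1-(\alpha\beta)^{-1/2}$ is strictly positive, and the right-hand side is an integrable Gaussian on $\R^{2d}$.

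Consequently the hypothesis \eqref{eq:beurling_STFT_partial_L1} of Theorem~\ref{thm:beurling_partial_STFT} is satisfied with $N=0$, and that theorem immediately yields $f\equiv 0$ or $g\equiv 0$. There is essentially no obstacle: the content is a one-line convexity estimate combined with the preceding Beurling theorem. An alternative self-contained route would be to apply the classical Hardy \up\ for the full \stft\ to each fiber $V_{g_{-\omega_2}}f_{x_2}$ (using the identity $V^k_g f(x_1,x_2,\omega_1,\omega_2) = V_{g_{-\omega_2}}f_{x_2}(x_1,\omega_1)$) to obtain a pointwise dichotomy for a.e.~$(x_2,\omega_2)$, and then to globalize via Lemma~\ref{lem:measure_theory}. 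However, passing from a fiberwise dichotomy to a global one requires a separate measurability argument to show the two alternatives do not interleave, and this is entirely circumvented by routing through the Beurling theorem.
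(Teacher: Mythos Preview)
Your proposal is correct and matches the paper's own proof essentially line for line: both use the same weighted AM--GM bound $|x\cdot\omega|\le \tfrac{1}{2\sqrt{\alpha\beta}}(\alpha\|x\|^2+\beta\|\omega\|^2)$ to dominate $|V^k_gf|\,e^{\pi|x\cdot\omega|}$ by an integrable Gaussian and then invoke Theorem~\ref{thm:beurling_partial_STFT} with $N=0$. The paper likewise remarks that the fiberwise Hardy route is a viable alternative.
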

\begin{proof}
This is a direct consequence of Theorem \ref{thm:beurling_partial_STFT}. Since
\begin{equation}
    |x\cdot \omega| \leq \frac{\sqrt{\alpha}}{2\sqrt{\beta}}\norm{x}^2+\frac{\sqrt{\beta}}{2\sqrt{\alpha}}\norm{\omega}^2,
\end{equation}
we can estimate the integral in \eqref{eq:beurling_STFT_partial} with
$N=0$  as follows:
\begin{equation}
        \int_\Rd \int_\Rd |V_g^k f(x,\omega)| e^{\pi |x\cdot \omega|}\, dx d\omega 
    \leq  c \int_\Rd \int_\Rd e^{-\pi (1-(\alpha\beta)^{-1/2}) (\alpha \norm{x}^2+ \beta \norm{y}^2)/2} \, dx d\omega.
\end{equation}
This integral is finite for $\alpha\beta>1$, so that  Theorem
\ref{thm:beurling_partial_STFT} applies and yields that either $f\equiv 0$
or $g \equiv 0$.
\end{proof} 
Theorem \ref{thm:hardy_partial_1} can also be derived directly from
Hardy's \up\ for the \stft\
\cite{GroechenigZimmermann2001,BonamiEtAl2003}  in analogous fashion to the proof of  Theorem \ref{thm:beurling_partial_STFT}

\subsection{Gelfand-Shilov type}
Next, we prove a Gelfand-Shilov-type \up. In this case, Gaussian
weights as in Hardy's \up\ are replaced by other exponential weights. The
\up\ of Gelfand-Shilov for the Fourier pair $(f,\hat{f})$ is important
in  the theory of test functions and distributions~\cite{gelfand-vilenkin}, and an analogous
theory of test functions could be established by using the (partial)
\stft. 

\begin{theorem}\label{thm:gelfand_shilov_partial_STFT}
Let $1<p<\infty$, and let $q$ denote its conjugate H\"older exponent
$\tfrac{1}{p} + \tfrac{1}{q}=1$. If $f,g\in\ltd$ satisfy
\begin{equation}
    \begin{split}
    \int_\Rd\int_\Rd |V^k_g f(x,\omega)|e^{\frac{\pi}{p} \alpha ^p\norm{x}_p^p} \, dx d\omega <\infty, \\
    \int_\Rd\int_\Rd |V^k_g f(x,\omega)|e^{\frac{\pi}{q} \beta ^q\norm{\omega}_q^q} \, dx d\omega<\infty,
    \end{split}
\end{equation}
for  $\alpha,\beta>0$ satisfying  $\alpha \beta \geq 1$, then $f\equiv 0$ or $g\equiv 0$.
\end{theorem}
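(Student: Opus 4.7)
My plan is to run the same three-step template that produced Theorems~\ref{thm:beurling_partial_STFT} and~\ref{thm:hardy_partial_1}: reduce to slices of the partial \stft\ via Fubini, apply a Gelfand--Shilov-type uncertainty principle to the full \stft\ on $L^2(\R^k)$, and close the argument with Lemma~\ref{lem:measure_theory}(iii).

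First, I would exploit the decisive factorization $\|x\|_p^p = \|x_1\|_p^p + \|x_2\|_p^p$ (with $x_1\in\R^k$, $x_2\in\R^{d-k}$) and the analogous identity for $\omega$. Combined with the slice identity $V^k_g f(x_1,x_2,\omega_1,\omega_2) = V_{g_{-\omega_2}} f_{x_2}(x_1,\omega_1)$ and Fubini--Tonelli, the first integrability hypothesis transforms into
\begin{equation*}
\int_{\R^{2(d-k)}} e^{\frac{\pi}{p}\alpha^p\|x_2\|_p^p}\left(\int_{\R^{2k}} |V_{g_{-\omega_2}}f_{x_2}(x_1,\omega_1)|\,e^{\frac{\pi}{p}\alpha^p\|x_1\|_p^p}\,dx_1 d\omega_1\right)dx_2 d\omega_2 < \infty.
\end{equation*}
Because the outer weight is strictly positive, the inner integral is finite for almost every $(x_2,\omega_2)\in\R^{2(d-k)}$. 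The same manipulation applied to the second hypothesis produces the companion estimate on $\int_{\R^{2k}}|V_{g_{-\omega_2}}f_{x_2}(x_1,\omega_1)|e^{\frac{\pi}{q}\beta^q\|\omega_1\|_q^q}dx_1 d\omega_1$ for a.e.\ $(x_2,\omega_2)$. After intersecting the two full-measure sets, both slice conditions hold simultaneously.

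The second step is to invoke the Gelfand--Shilov uncertainty principle for the full \stft\ on $L^2(\R^k)$: if $F,G\in L^2(\R^k)$ satisfy the two slice integrability conditions with $\alpha\beta\geq 1$, then $F\equiv 0$ or $G\equiv 0$. This is the natural \stft\ analogue of the classical Gelfand--Shilov uncertainty for the Fourier pair $(f,\hat f)$, and can be derived from Young's inequality $|y\cdot\eta|\leq \frac{\alpha^p}{p}\|y\|_p^p + \frac{\beta^q}{q}\|\eta\|_q^q$---where $\alpha\beta\geq 1$ is exactly the right condition---combined with Theorem~\ref{thm:beurling_STFT}. Applied to $F=f_{x_2}$ and $G=g_{-\omega_2}$, it forces $V_{g_{-\omega_2}}f_{x_2}\equiv 0$ for a.e.\ $(x_2,\omega_2)$, hence $V^k_g f\equiv 0$ on $\R^{2d}$. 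Lemma~\ref{lem:measure_theory}(iii) then delivers $f\equiv 0$ or $g\equiv 0$.

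The principal technical obstacle is the \stft\ Gelfand--Shilov step itself. A direct Cauchy--Schwarz splitting of the type $|V_G F|e^{\pi|y\cdot\eta|} = |V_G F|^{1/2}e^{\frac{\pi\alpha^p}{p}\|y\|_p^p/2}\cdot|V_G F|^{1/2}e^{\frac{\pi\beta^q}{q}\|\eta\|_q^q/2}$ would reduce Gelfand--Shilov to Beurling but picks up a factor of two in the exponents, so naively it demands $\alpha\beta\geq 2$ instead of $\alpha\beta\geq 1$. To recover the sharp constant one must exploit either the polynomial slack $(1+\|y\|+\|\eta\|)^{-N}$ in Theorem~\ref{thm:beurling_STFT} with a dyadic decomposition of phase space separating the regions where $\|y\|_p$ and $\|\eta\|_q$ dominate, or use the sharper versions of Gelfand--Shilov uncertainty for the \stft\ established via the Bargmann transform. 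Once that ingredient is settled, the slicing and Lemma~\ref{lem:measure_theory} finish the proof by the same book-keeping as in Theorems~\ref{thm:beurling_partial_STFT} and~\ref{thm:hardy_partial_1}.
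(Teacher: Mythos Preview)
Your outline is sound but more circuitous than necessary, and the ``principal technical obstacle'' you flag has a one-line resolution that you overlook. The paper does not slice first and then invoke a separate Gelfand--Shilov theorem for the full \stft; instead it works directly at the level of the partial \stft\ and reduces to Theorem~\ref{thm:beurling_partial_STFT}. The key step is to bound $e^{\pi|x\cdot\omega|}$ by a \emph{sum} rather than a product: from Young's inequality $\pi|x\cdot\omega|\le \tfrac{1}{p}\pi\alpha^p\|x\|_p^p+\tfrac{1}{q}\pi\beta^q\|\omega\|_q^q$ (valid when $\alpha\beta\ge 1$) and the convexity of the exponential,
\[
e^{\pi|x\cdot\omega|}\;\le\; e^{\frac{1}{p}(\pi\alpha^p\|x\|_p^p)+\frac{1}{q}(\pi\beta^q\|\omega\|_q^q)}\;\le\;\tfrac{1}{p}\,e^{\pi\alpha^p\|x\|_p^p}+\tfrac{1}{q}\,e^{\pi\beta^q\|\omega\|_q^q}.
\]
Multiplying by $|V^k_gf(x,\omega)|$ and integrating, the Beurling integrand is dominated by a convex combination of the two hypotheses, so $\int_{\R^{2d}}|V^k_gf|\,e^{\pi|x\cdot\omega|}\,dx\,d\omega<\infty$ and Theorem~\ref{thm:beurling_partial_STFT} (with $N=0$) gives $f\equiv 0$ or $g\equiv 0$ immediately.

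The Cauchy--Schwarz route you sketch indeed loses a factor of two because it splits $|V_GF|$ multiplicatively; the convexity trick avoids this entirely by splitting the \emph{weight} additively, so no dyadic decomposition and no Bargmann machinery are needed. Your slicing-first strategy would also work once you plug in this same convexity argument at the $L^2(\R^k)$ level, but then you are just reproving Theorem~\ref{thm:beurling_partial_STFT} inside the proof---cleaner to quote it.
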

\begin{proof}
Using Young's inequality (in the exponent) and the convexity of the
exponential function, we estimate
\begin{align*}
  e^{\pi |x\cdot \omega |} &\leq e^{\pi \alpha \beta |x\cdot \omega |}
    \leq e^{\pi \big( \frac{1}{p} \|\alpha
    x\|_p^p + \frac{1}{q} \|\beta  \omega \|_q^q\big)}\\
&\leq \tfrac{1}{p}  e^{\pi \|\alpha   x\|_p^p} + \tfrac{1}{q} e^{\pi \|\beta  \omega \|_q^q}\, .
\end{align*}
Consequently,
\begin{align*}
\lefteqn{  \int_\Rd\int_\Rd |V^k_g f(x,\omega)|e^{\pi |x\cdot \omega|} \, dx
  d\omega}\\
  &\leq 
\tfrac{1}{p} \int_\Rd\int_\Rd |V^k_g f(x,\omega)|e^{\pi  \norm{\alpha x}_p^p} \, dx d\omega  
         + \tfrac{1}{q}\int_\Rd\int_\Rd |V^k_g
        f(x,\omega)|e^{\pi \norm{ \beta \omega}_q^q} \, dx
        d\omega <\infty \, .       
\end{align*}
Thus Theorem \ref{thm:beurling_partial_STFT} applies and yields
 $f\equiv 0$ or $g\equiv 0$.
\end{proof}

\section{Uncertainty principles for metaplectic \tf\ representations}\label{sec:joint}
Following the strategy outlined in Section~\ref{rem:strategy}, we now prove
several 
\up s for the \mfr s $W_\A $. The main tool is a factorization of
$W_{\A}$ and a subsequent reduction of the \up\ to the partial \stft.

    The following representation of $W_{\A} $ was proved in \cite[Lem.~3.3., Thm.~5.4.]{GroechenigShafkulovska2024}.
    \begin{lemma}\label{lem:WbA-to-partial-trivial}
          Let $\bA = \bV_Q\bD_L \bRo_U$ be a pre-Iwasawa decomposition
          of $\bA\in\Sp$.

          (i) Then
          \begin{equation} \label{eq:f1}
              |\W_\bA(f,g)(\lambda) |= |\hat{\bV}_Q \hat{\bD}_L \hat{\bRo}_U (f\otimes\overline{g})(\lambda)| = |\det L|^{-1/2}\, |\hat{\bRo}_U (f\otimes\overline{g})(L^{-1}\lambda)|.
          \end{equation}
       (ii)    Furthermore, if $U^tU \in\mathrm{U}(2d,\C)$ is $d\times d$ block-diagonal, then there exist matrices $W\in\mathrm{O}(2d,\R)$ and $V_1,V_2\in\mathrm{U}(d,\C)$ such that $U = W\cdot \diag(V_1,V_2)$, thus
          \begin{equation} \label{eq:n7}
        |\W_\bA(f,g) (\lambda)|
        = |\det L|^{-1/2} \, 
        \big|\big(\hat{\Ro}_{V_1} f\otimes \hat{\Ro}_{\overline{V_2}} g\big)(W^tL^{-1} \lambda)\big|.
          \end{equation}
    \end{lemma}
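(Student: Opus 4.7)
The plan is to prove both parts by a direct computation exploiting the pre-Iwasawa factorization together with the explicit action of the generators $\hat{\bV}_Q$ and $\hat{\bD}_L$, and the tensor structure of $\hat{\bRo}_U$ in the case when $U$ decomposes as an orthogonal matrix times a block-diagonal unitary.

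For part (i), I would start from $\W_\bA(f,g) = \hat{\bA}(f \otimes \bar g)$, apply the pre-Iwasawa factorization $\bA = \bV_Q \bD_L \bRo_U$, and invoke the composition rule \eqref{eq:meta_decomp_std}. The unimodular phase factor disappears under the absolute value. Then I substitute the explicit formulas \eqref{eq:standard_metaplectic}: $\hat{\bV}_Q$ is multiplication by the unimodular chirp $e^{\pi i \lambda \cdot Q \lambda}$, hence $|\hat{\bV}_Q h(\lambda)| = |h(\lambda)|$, while $\hat{\bD}_L h(\lambda) = |\det L|^{-1/2} h(L^{-1}\lambda)$ produces the normalization $|\det L|^{-1/2}$ and the coordinate change $L^{-1}\lambda$. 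Chaining these operations yields \eqref{eq:f1}.

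For part (ii), the first step is to invoke the appendix result of \cite{GroechenigShafkulovska2024}, which, under the hypothesis that $U^tU$ is $d\times d$ block-diagonal, produces a factorization $U = W \cdot \diag(V_1, V_2)$ with $W \in \mathrm{O}(2d,\R)$ and $V_1, V_2 \in \mathrm{U}(d,\C)$. The index isomorphism \eqref{eq:index_iso} turns this into $\Ro_U = \Ro_W \, \Ro_{\diag(V_1, V_2)}$, and \eqref{eq:DVRoV} identifies the orthogonal factor with the dilation $\Ro_W = \D_W$. Using the tensor-product identity \eqref{eq:op_embed} and applying the resulting operator to the elementary tensor $f \otimes \bar g$, I obtain $\hat{\Ro}_{V_1} f \otimes \hat{\Ro}_{V_2} \bar g$ evaluated at the argument shifted by $W^t$. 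Combining with part (i) and the fact that the coordinate change $\hat{\D}_W$ has unit Jacobian (since $W$ is orthogonal) produces the stated formula with argument $W^t L^{-1}\lambda$.

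The main obstacle will be the apparent mismatch in the second tensor slot: the natural output of the tensor-product computation is $\hat{\Ro}_{V_2} \bar g$, whereas the statement displays $\hat{\Ro}_{\overline{V_2}} g$. To reconcile these, I need the pointwise identity $|\hat{\Ro}_{V_2} \bar g| = |\hat{\Ro}_{\overline{V_2}} g|$. I would verify this via the transparent interaction of complex conjugation with the elementary generators: coordinate changes commute with conjugation, the chirp $e^{\pi i x\cdot Q x}$ becomes its complex conjugate (which corresponds to replacing $Q$ by $-Q$), and $\overline{\F h} = \F^{-1} \bar h$. Factoring $\hat{\Ro}_{V_2}$ into the generators $\F$, $\hat{\V}_Q$, $\hat{\D}_L$ and tracking conjugation through each factor yields the required identity up to a unimodular phase, which suffices since only absolute values are claimed. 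Everything else is routine book-keeping.
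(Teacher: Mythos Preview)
Your proposal is correct. The paper does not reprove this lemma at all; it simply cites \cite[Lem.~3.3, Thm.~5.4]{GroechenigShafkulovska2024} for both parts, so your direct computation from the generator formulas \eqref{eq:standard_metaplectic}, \eqref{eq:DVRoV}, \eqref{eq:op_embed} is exactly the argument that is being quoted, and your treatment of the conjugation identity $|\hat{\Ro}_{V_2}\bar g|=|\hat{\Ro}_{\overline{V_2}}g|$ via tracking complex conjugation through the generators (equivalently, via the involution $\A\mapsto \begin{psmallmatrix}I&0\\0&-I\end{psmallmatrix}\A\begin{psmallmatrix}I&0\\0&-I\end{psmallmatrix}$ on $\Spp$, which sends $\Ro_V$ to $\Ro_{\overline V}$) is the standard and correct way to close the gap.
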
 

    Lemma \ref{lem:WbA-to-partial-trivial} simplifies several
    aspects.

    Firstly, since 
    $|\widehat{\bV} _Q F| = |F|$ and $\hat{\bD}_L F(x) = |\det
    L|^{-1/2} F(L\inv x)$, these two operators do not
     change decay or integrability properties. 
   After dropping them in the factorization
   \eqref{eq:f1}, we may focus on 
   $\bRo_U$ for some $U\in\mathrm{U}(2d,\C)$.

    Secondly, there is a fundamental dichotomy for \up s for \mfr s
    depending solely on the nature of  $U^tU$. 

      Let us first settle the uninteresting case.

      \begin{lemma}
Let $\bA = \bV_Q\bD_L \bRo_U$ and assume that  $U^tU$ 
      is block-diagonal.     Then there exist non-zero functions $f,g\in\ltd$,  such that $\W_\bA(f,g)$ has compact support.    
      \end{lemma}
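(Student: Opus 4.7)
The plan is to read the construction directly off of Lemma~\ref{lem:WbA-to-partial-trivial}(ii). By assumption $U^tU$ is $d\times d$ block-diagonal, so the lemma supplies $W\in\mathrm{O}(2d,\R)$ and $V_1,V_2\in\mathrm{U}(d,\C)$ with
\begin{equation*}
|\W_\bA(f,g)(\lambda)|
= |\det L|^{-1/2}\,\bigl|\bigl(\hat{\Ro}_{V_1}f\otimes \hat{\Ro}_{\overline{V_2}}g\bigr)(W^tL^{-1}\lambda)\bigr|,\qquad \lambda\in\Rdd.
\end{equation*}
Consequently, $\supp |\W_\bA(f,g)|$ is the image under the invertible linear map $\lambda\mapsto LW\lambda$ of $\supp\bigl(\hat{\Ro}_{V_1}f\otimes \hat{\Ro}_{\overline{V_2}}g\bigr)$, so it suffices to produce nonzero $f,g\in\ltd$ for which the latter support is compact.

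I would simply prescribe the images. Pick any two nonzero, compactly supported functions $\varphi,\psi\in\ltd$ (say, the indicator functions of the closed unit ball in $\Rd$) and set
\begin{equation*}
f := \hat{\Ro}_{V_1}^{-1}\varphi,\qquad g := \hat{\Ro}_{\overline{V_2}}^{-1}\psi.
\end{equation*}
Since the metaplectic operators $\hat{\Ro}_{V_1}$ and $\hat{\Ro}_{\overline{V_2}}$ are unitary on $\ltd$, the functions $f$ and $g$ are well-defined elements of $\ltd$ and are nonzero. By construction $\hat{\Ro}_{V_1}f=\varphi$ and $\hat{\Ro}_{\overline{V_2}}g=\psi$, so $\hat{\Ro}_{V_1}f\otimes \hat{\Ro}_{\overline{V_2}}g=\varphi\otimes\psi$ has compact support in $\Rdd$. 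Combining this with the display above, $\supp|\W_\bA(f,g)|$ is contained in the compact set $LW\cdot(\supp\varphi\times\supp\psi)$, which completes the proof.

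There is no genuine obstacle here: the entire content of the lemma is already packaged in \eqref{eq:n7}, and the work has been done in establishing Lemma~\ref{lem:WbA-to-partial-trivial}(ii). The only point worth flagging is that one must invert the metaplectic operators (not the symplectic matrices) to pass from the prescribed compactly supported images back to $f$ and $g$, which is legitimate because metaplectic operators are unitary on $\ltd$.
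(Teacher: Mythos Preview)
Your proof is correct and follows essentially the same approach as the paper: invoke \eqref{eq:n7}, choose compactly supported $\varphi,\psi\in\ltd$, and pull them back via the unitary operators $\hat{\Ro}_{V_1}^{-1}$ and $\hat{\Ro}_{\overline{V_2}}^{-1}$ to obtain nonzero $f,g$ with $\W_\bA(f,g)$ compactly supported. The paper's version is simply more terse.
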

      \begin{proof}
    Just take $f_0,g_0\in C_c(\Rd)$ and set $f= \hat{\Ro}_{V_1}^{-1} f_0$ and $g = \hat{\Ro}_{\overline{V_2}}^{-1} g_0$. 
    Then by \eqref{eq:n7}, $|\W_\bA(f,g)| = |\hat\bD_{LW} (f_0\otimes g_0)|$ is a linear
    coordinate change of $f_0\otimes g_0$ and has compact support.     
      \end{proof}
    
    Thus, in this case we cannot expect an \up\ expressed by a decay
    or integrability condition to hold. For the treatment of the
    alternative, we use the following representation of $\W _{\bA } $.  

    \begin{lemma}\label{lem:WRu-to-partial} 
        Let $\bA=\bV_Q\bD_L\bRo_U\in\Sp$ with $U^tU\in \mathrm{U}(2d,\C)$ not $d\times d$ block-diagonal. 
        Then there exist an index $1\leq k\leq d$, symplectic matrices $\A,\B\in\Spp$ and a matrix $\Omega\in\mathrm{GL}(2d,\R)$ such that
        \begin{equation}\label{eq:WRu-reduction}
            |\W_{\bA}(f,g)| = |\hat{\bD}_{\Omega} V^k_{\hat\B g} \hat\A f|,\qquad f,g\in\ltd.
        \end{equation}
        The matrices $\Omega,\A,\B$ and the index $k$ can be derived explicitly from the pre-Iwasawa decomposition of $\bA$.
    \end{lemma}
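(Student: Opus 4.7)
The plan is to peel off the factors of the pre-Iwasawa decomposition $\bA=\bV_Q\bD_L\bRo_U$ and then concentrate on $\hat{\bRo}_U$ alone. By Lemma~\ref{lem:WbA-to-partial-trivial}(i), the chirp $\hat{\bV}_Q$ disappears in absolute value and the dilation $\hat{\bD}_L$ acts merely as the coordinate change $\lambda\mapsto L^{-1}\lambda$ (together with a harmless prefactor $|\det L|^{-1/2}$). Absorbing this into the target $\hat{\bD}_\Omega$, the problem reduces to exhibiting $1\leq k\leq d$, $\A,\B\in\Spp$, and $\Omega_0\in\mathrm{GL}(2d,\R)$ such that
\begin{equation*}
    |\hat{\bRo}_U(f\otimes\overline{g})| \;=\; |\hat{\bD}_{\Omega_0}\, V^k_{\hat{\B}g}\,\hat{\A}f|.
\end{equation*}

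The heart of the matter is a structural factorization of the $2d\times 2d$ unitary $U$. Since by hypothesis $U^tU$ is not $d\times d$ block-diagonal, the pure tensor-product alternative of Lemma~\ref{lem:WbA-to-partial-trivial}(ii) is unavailable, and a genuine coupling between the two halves of $L^2(\R^{2d})\cong L^2(\R^d)\otimes L^2(\R^d)$ must be present. I would invoke the explicit decomposition of $\bRo_U$ recalled in the appendix from~\cite{GroechenigShafkulovska2024}: after conjugation by permutation and orthogonal matrices and multiplication by block-diagonal unitaries $\diag(V_1,V_2)\in\mathrm{U}(2d,\C)$, what remains is a partial Fourier transform in exactly $k$ of the $2d$ variables, where $k$ is controlled by the rank of the off-diagonal part of $U^tU$ (essentially a cosine–sine decomposition count). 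On the operator side, this means that $\hat{\bRo}_U$ factors, up to $\hat{\bD}_W$-type coordinate changes, as $\hat{\Ro}_{V_1}\otimes\hat{\Ro}_{\overline{V_2}}$ applied to $f\otimes\overline g$, composed with a partial Fourier transform in $k$ variables.

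Evaluating this factorization on $f\otimes\overline g$ yields $\hat{\Ro}_{V_1}f\otimes\hat{\Ro}_{\overline{V_2}}\overline g$, and the subsequent partial Fourier transform in the designated $k$ variables is, after relabelling of coordinates, precisely the integral defining $V^k_{\hat{\B}g}\hat{\A}f$ in Definition~\ref{def:partial_stft}. This identifies $\hat{\A}=\hat{\Ro}_{V_1}\in\Mpp$ (so $\A=\Ro_{V_1}\in\Spp$) and $\hat{\B}\in\Mpp$ chosen so that $\hat{\B}g$ matches $\hat{\Ro}_{\overline{V_2}}\overline g$ up to the conjugation intrinsic to the \stft; since complex conjugation is implemented by a symplectic automorphism, $\B\in\Spp$. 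The matrix $\Omega$ is the product of $L^{-1}$, the orthogonal changes of variable arising above, and the permutation placing the coupled coordinates first.

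The main obstacle is not conceptual but combinatorial: matching $4d$-dimensional metaplectic operators with $2d$-dimensional ones acting on the tensor factors, tracking complex conjugations and phase factors (harmless under $|\cdot|$), and making the coordinate permutation precise so the outcome coincides with Definition~\ref{def:partial_stft}. With the appendix decomposition at hand, the rest is exactly the "diligent book-keeping" described in Section~\ref{rem:strategy}.
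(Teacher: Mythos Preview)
Your high-level outline is right: peel off $\hat{\bV}_Q$ and $\hat{\bD}_L$ using Lemma~\ref{lem:WbA-to-partial-trivial}(i), focus on $\hat{\bRo}_U$, and invoke the decomposition from the appendix. But your description of what that decomposition actually delivers is inaccurate in ways that matter.

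The factorization you sketch---``$\hat{\Ro}_{V_1}\otimes\hat{\Ro}_{\overline{V_2}}$ applied to $f\otimes\overline g$, composed with a partial Fourier transform''---is essentially the block-diagonal picture of Lemma~\ref{lem:WbA-to-partial-trivial}(ii) with an extra partial Fourier transform bolted on. That is \emph{not} the mechanism in the non-block-diagonal case. The paper's route is: (a) choose $\tau\in\T$ so that $\im(\tau U)$ is invertible (this $\tau$-rotation step is missing from your outline and is what makes the rest possible); (b) apply the \emph{free} factorization $\bRo_{\tau U}=\bV_{A_\tau B_\tau^{-1}}\bD_{B_\tau}\bJ\bV_P$ with $P=B_\tau^{-1}A_\tau$ symmetric and, crucially, not $d\times d$ block-diagonal; (c) take the singular value decomposition of the off-diagonal block $P_{12}=W_1\Gamma W_2^t$ and reduce $\bJ\bV_P$ (not $U$ itself) to the partial STFT via the explicit identity for $\F\hat{\bV}_{\begin{psmallmatrix}0&\Gamma\\\Gamma&0\end{psmallmatrix}}(f\otimes\overline g)$. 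The index is $k=\mathrm{rank}(P_{12})$, not the rank of the off-diagonal block of $U^tU$.

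As a consequence, your identifications $\hat{\A}=\hat{\Ro}_{V_1}$ and $\hat{\B}\sim\hat{\Ro}_{\overline{V_2}}$ are wrong: the actual $\A$ and $\B$ are each products of a scalar rotation $\hat{\Ro}_{\overline\tau I}$ (resp.\ $\hat{\Ro}_{\tau I}$), a chirp $\hat{\V}_{P_{jj}}$, an orthogonal coordinate change $\hat{\D}_{W_j^t}$, and a further partial Fourier transform/dilation. Finally, the remark that ``complex conjugation is implemented by a symplectic automorphism'' is a red herring---conjugation is antilinear and not metaplectic; the paper never needs such a claim, since the conjugate $\overline g$ is handled directly in the integral identity that produces $V^k_{\hat g}h$.
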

    Lemma \ref{lem:WRu-to-partial} is a summary of a series of
    computations in \cite{GroechenigShafkulovska2024}. For
    completeness we explain  the exact derivation of the matrices  in the appendix.

Most \up s are related to decay or integrability conditions, which are
quite robust under linear changes of coordinates, and allow us to derive \up s for $\W_{\bA}(f,g)$ equivalent to \up s for the corresponding \stft. 
To illustrate this procedure, we derive a Beurling-type and a Hardy-type \up\ for $\W_{\bA}(f,g)$. 

Before we state a simple version of a Beurling-type \up\ for  $\W_\bA(f,g)$, we make a small observation. 
As we are going to discuss \up s for \emph{joint} \tf\
representations, it makes sense to express the conditions in terms of
$\lambda = (x,\omega)\in\Rdd$ instead of $x,\omega\in\Rd$. The inner
product on $\rdd $ is then written as 
    \begin{equation}\label{eq:beurling_joint}
        x\cdot \omega = \tfrac{1}{2}\lambda\cdot \begin{psmallmatrix}
            0 & I_d \\ I_d & 0
        \end{psmallmatrix} \lambda.
    \end{equation}
    In the following, we will use the right-hand side more often since it is better compatible with the change of coordinates of $\bD_\Omega$.
 
\begin{theorem}[Beurling-type]\label{thm:beurling_WRu}
    Let $\bA = \bRo_{U}\in \Sp$ and $\Omega=\Omega(\bA)$ be the matrix
    introduced in  the factorization \eqref{eq:WRu-reduction}.  Assume that $U^tU\in\mathrm{U}(2d,\C)$ is not $d\times d$ block-diagonal. If $f,g\in\ltd$ satisfy 
    \begin{equation}\label{eq:beurling_WRu}
        \int_{\Rdd} |\W_\bA(f,g)(\lambda)|e^{\pi|\Omega^{-1}\lambda\cdot  \begin{psmallmatrix}
            0 & I_d \\ I_d & 0
        \end{psmallmatrix}\Omega^{-1} \lambda|/2}\, d\lambda <\infty,
    \end{equation}
    then $f\equiv 0$ or $g\equiv 0$.
\end{theorem}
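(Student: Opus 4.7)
The plan is to follow the four-step reduction outlined in Section~\ref{rem:strategy}, transferring the Beurling-type \up\ for $\W_\bA(f,g)$ into the corresponding \up\ for the partial \stft\ (Theorem~\ref{thm:beurling_partial_STFT}). The hypothesis that $U^tU$ is not $d\times d$ block-diagonal is precisely what lets us invoke Lemma~\ref{lem:WRu-to-partial}, which is the bridge between the two settings.

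First, I would apply Lemma~\ref{lem:WRu-to-partial} to obtain an index $1 \leq k \leq d$, symplectic matrices $\A, \B \in \Spp$, and $\Omega \in \mathrm{GL}(2d,\R)$ such that
\begin{equation*}
|\W_\bA(f,g)(\lambda)| = |\det\Omega|^{-1/2}\, \big|V^k_{\hat{\B} g}\hat{\A} f(\Omega^{-1}\lambda)\big|, \qquad \lambda \in\Rdd.
\end{equation*}
Second, I would change variables via $\mu = \Omega^{-1}\lambda$, $d\lambda = |\det\Omega|\, d\mu$, in the integral \eqref{eq:beurling_WRu}. Writing $\mu = (x_\mu, \omega_\mu) \in \R^d \times \R^d$, a direct calculation gives $\mu \cdot \begin{psmallmatrix} 0 & I_d \\ I_d & 0 \end{psmallmatrix} \mu = 2\, x_\mu \cdot \omega_\mu$, so the exponential weight collapses into exactly the Beurling weight $e^{\pi |x_\mu \cdot \omega_\mu|}$, and the hypothesis \eqref{eq:beurling_WRu} transforms into
\begin{equation*}
|\det\Omega|^{1/2} \int_{\Rdd} \big|V^k_{\hat{\B} g}\hat{\A} f(\mu)\big|\, e^{\pi |x_\mu \cdot \omega_\mu|}\, d\mu < \infty.
\end{equation*}
Third, I would apply Theorem~\ref{thm:beurling_partial_STFT} with $N=0$ (which is admissible since $0 \leq k$) to the pair $(\hat{\A} f, \hat{\B} g) \in \ltd \times \ltd$, concluding that $\hat{\A} f \equiv 0$ or $\hat{\B} g \equiv 0$. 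Since $\hat{\A}$ and $\hat{\B}$ are unitary on $\ltd$, this forces $f \equiv 0$ or $g \equiv 0$.

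The argument above is the clean half of the proof; the genuine technical work is already absorbed in Lemma~\ref{lem:WRu-to-partial}, where the pre-Iwasawa decomposition is combined with further singular value decompositions to expose the partial \stft\ inside $\W_\bA$ and to identify $\Omega$ and $k$. Once that machinery is available, the only potential pitfall here is computational: tracking $\Omega$ through the change of variables and verifying that the quadratic form defined by $\begin{psmallmatrix} 0 & I_d \\ I_d & 0 \end{psmallmatrix}$ in the exponent is precisely what reproduces the Beurling weight after substitution. With this bookkeeping in place, the Beurling-type \up\ for $\W_\bA$ follows immediately from the corresponding result for the partial \stft.
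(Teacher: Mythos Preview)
Your proposal is correct and follows essentially the same approach as the paper: invoke Lemma~\ref{lem:WRu-to-partial} to rewrite $|\W_\bA(f,g)|$ as a dilated partial \stft, change variables $\mu=\Omega^{-1}\lambda$ so that the exponent becomes $\pi|x_\mu\cdot\omega_\mu|$, and then apply Theorem~\ref{thm:beurling_partial_STFT} (with $N=0$) together with unitarity of $\hat\A,\hat\B$. The paper's proof is identical in structure and content.
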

\begin{proof}
By Lemma \ref{lem:WRu-to-partial},
\begin{equation}
    |\W_\bA(f,g)| = 
    |\det \Omega|^{-1/2} |V^k_{\hat{\B}g} \hat{\A} f (\Omega^{-1}\cdot)|.
\end{equation}
Therefore, we can conclude from \eqref{eq:beurling_WRu}
 that
    \begin{equation}
       \begin{split}
       & |\det \Omega|^{1/2} 
         \int\limits_{\Rdd}|V^k_{\hat{\B} g} \hat{\A} f
         (\lambda)| \, e^{\pi|\lambda \cdot \begin{psmallmatrix}
            0 & I_d \\ I_d & 0
        \end{psmallmatrix} \lambda|/2}\, d\lambda \\
        \leq & |\det \Omega|^{-1/2} \int\limits_{\Rdd} |V^k_{\hat{\B} g} \hat{\A} f (\Omega^{-1}\lambda)|\, e^{\pi|\Omega^{-1}\lambda\cdot \begin{psmallmatrix}
            0 & I_d \\ I_d & 0
        \end{psmallmatrix} \Omega^{-1} \lambda|/2}\, d\lambda  <\infty.
       \end{split}
    \end{equation}
    By Theorem \ref{thm:beurling_partial_STFT}, $\hat{\A}f\equiv 0$ or
    $\hat{\B} g\equiv 0$. Since $\hat{\A}$ and $\hat{\B}$ are unitary, this is equivalent to $f\equiv 0$ or $g\equiv 0$, as desired.
\end{proof}

A Hardy-type \up \ follows directly from Theorem \ref{thm:beurling_WRu}. The proof is analogous to the proof of Theorem \ref{thm:hardy_partial_1}.
\begin{corollary}[Hardy-type]\label{cor:hardy_WRu}
      Let $\bA = \bV_Q\bD_L \bRo_{U}\in\Sp$ and $\Omega=\Omega(\bA)$ be the
      matrix in  the factorization \eqref{eq:WRu-reduction}.  Assume that $U^tU\in\mathrm{U}(2d,\C)$ is not $d\times d$ block-diagonal. If $f,g\in\ltd$ satisfy 
       \begin{equation}\label{eq:hardy_WRu_1}
         |\W_\bA(f,g)(\lambda)| \leq c e^{-\pi \alpha\norm{\Omega^{-1} \lambda}^2/2}\quad \text{for a.a. } \lambda\in\R^{2d},
    \end{equation}
    for some $c>0$ and $\alpha>1$,    
    then $f\equiv 0$ or $g\equiv 0$.
      \end{corollary}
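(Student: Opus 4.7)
The plan is to reduce the pointwise Gaussian decay to the integrability hypothesis of Theorem~\ref{thm:beurling_WRu}, in exact parallel with the passage from Theorem~\ref{thm:beurling_partial_STFT} to Theorem~\ref{thm:hardy_partial_1}. Writing $\mu = \Omega^{-1}\lambda = (x,\omega) \in \R^d\times\R^d$, I first observe that the quadratic form appearing in \eqref{eq:beurling_WRu} simplifies to
\begin{equation*}
\tfrac{1}{2}\Big|\Omega^{-1}\lambda\cdot \begin{psmallmatrix} 0 & I_d \\ I_d & 0 \end{psmallmatrix}\Omega^{-1}\lambda\Big| \;=\; |x\cdot\omega|,
\end{equation*}
so it suffices to deduce \eqref{eq:beurling_WRu} from the pointwise hypothesis \eqref{eq:hardy_WRu_1}.

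The key estimate is the elementary inequality
\begin{equation*}
|x\cdot\omega|\;\leq\;\tfrac{1}{2}\bigl(\norm{x}^2+\norm{\omega}^2\bigr)\;=\;\tfrac{1}{2}\norm{\Omega^{-1}\lambda}^2,
\end{equation*}
which follows from Cauchy--Schwarz and AM--GM. Combined with \eqref{eq:hardy_WRu_1}, the integrand in \eqref{eq:beurling_WRu} is dominated by $c\, e^{-\pi(\alpha-1)\norm{\Omega^{-1}\lambda}^2/2}$. A linear change of variables $\mu = \Omega^{-1}\lambda$ with Jacobian $|\det\Omega|$ reduces the integral to a standard Gaussian on $\Rdd$, which is finite precisely because $\alpha>1$. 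Theorem~\ref{thm:beurling_WRu} then yields $f\equiv 0$ or $g\equiv 0$.

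I do not anticipate any real obstacle: the whole argument is a one-line AM--GM bound followed by a linear change of variables, and the critical threshold $\alpha>1$ emerges naturally from the requirement that the net exponent $-\pi(\alpha-1)\norm{\mu}^2/2$ be strictly negative so that the Gaussian integrates. Everything nontrivial has already been packaged into the factorization \eqref{eq:WRu-reduction} and into Theorem~\ref{thm:beurling_WRu}; the Hardy corollary is merely the shadow that the pointwise Gaussian bound casts on the Beurling integrability condition.
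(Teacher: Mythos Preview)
Your proposal is correct and is exactly the approach the paper indicates: reduce the pointwise Gaussian bound to the Beurling integrability condition of Theorem~\ref{thm:beurling_WRu} via the AM--GM estimate $|x\cdot\omega|\le\tfrac12\|\Omega^{-1}\lambda\|^2$, in direct analogy with the passage from Theorem~\ref{thm:beurling_partial_STFT} to Theorem~\ref{thm:hardy_partial_1}. The paper gives no further details beyond this remark, so your write-up is in fact more explicit than the original.
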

   Alternatively, the Hardy-type \up\ follows from
   Theorem~\ref{thm:hardy_partial_1}. The case $\alpha =1$ in
   \eqref{eq:hardy_WRu_1} will be discussed in Section
   \ref{sec:quantitative}. 

\section{Qualitative uncertainty principles for quadratic \tf\ representations and pairs of metaplectic operators}\label{sec:pairs}

We next consider \up s for quadratic \mfr s $W_{\A} (f,f)$. 
 In this case, the situation is more complicated.

If $\bA = \bV_Q\bD_L\bRo_U$ and $U^tU$ is not block-diagonal, then \up
s for the \mfr\ $\W_\bA(f,f)$ are already covered by results in
Section \ref{sec:joint}. 

However, if $U^tU$ is block-diagonal, 
the situation is drastically different, as we have already noticed for
the Benedicks-type \up\ in~\cite{GroechenigShafkulovska2024}. The new phenomenon 
 is best illustrated by the Rihazcek distribution 
\begin{equation}
    R(f,g)(x,\omega) = e^{-2\pi i x\cdot
      \omega}f(x)\overline{\hat{g}(\omega)},\qquad x,\omega\in\Rd \, .
\end{equation}
and Benedicks's \up. 
The Rihazcek distribution is a metaplectic \tf\ representation with
block-diagonal $U^tU$~\cite{CorderoEtAl2022}. 
Clearly, $R(f,g)$ is  supported on a compact set for every  non-zero function $f$ and $\hat{g} \in\ltd$ with compact
support.  
However, the \up\ of Amrein-Berthier and Benedicks for the Fourier
pair $(f,\hat{f})$ 
states that $R(f,f)$ is supported on a set of finite measure if and
only if $f\equiv 0$~\cite{AmreinBerthier1977,Benedicks1985}.   

For the more general \mfr s $\W_\bA(f,f)$  it is a non-trivial
question when an  \up\ holds. 
In the following, we only consider the new case when  $\bA=\bRo_U$ and $U^tU$ is $d\times d$ block-diagonal. 
\begin{remark}[Reduction to pairs of operators]\label{rem:quadratic-to-pair} 
Let $\bRo_U\in \Sp$ with $U^tU\in\mathrm{U}(2d,\C)$. 
Using  Lemma \ref{lem:WbA-to-partial-trivial} (ii) and omitting the coordinate change $\bD _{WL}$, we may  assume without loss of
generality that $U = \diag(V_1,V_2)$ for some
$V_1,V_2\in\mathrm{U}(d,\C)$, and  we  study 
\begin{equation}
    |\hat\bRo_{\diag(V_1,V_2)} (f\otimes \overline{f})|
    = |\hat \Ro_{V_1} f| \otimes |\hat \Ro_{\overline{V_2}} f|.
\end{equation}

By setting $g = \hat \Ro_{V_1} f$ and $V = \overline{V_2}V_1^{-1} = \overline{V_2}V_1^{*}$, we rewrite this as
\begin{equation} \label{eq:n4}
    |\hat\bRo_{\diag(V_1,V_2)} (f\otimes \overline{f})|
    = |g| \otimes |\hat \Ro_{\overline{V_2}} \hat \Ro_{V_1}^{-1} \hat \Ro_{V_1} f |
    = |g| \otimes |\hat \Ro_{V} g|.
\end{equation}
After this reduction, it suffices to study  \up s for the pair $(g, \hat\Ro_V g)$. 
 \end{remark} 
 If $V$ is real-valued, i.e., $V\in\mathrm{O}(d,\R)$, then $
 \hat\Ro_V g(x) = g(V\inv x)$. Thus, if $g$ has compact support,
 then so does  $\hat\D_V g$, and no decay or integrability condition
 can force $g\equiv 0$. We will see that the real-valuedness of $V$ is the only obstruction
 to \up s for $W_{\bA } (f,f)$. For a characterization of those
 metaplectic matrices where the reduction in \eqref{eq:n4} yields a
 real-valued $V$ we refer to  \cite[Lem.~6.1.]{GroechenigShafkulovska2024}. 
\begin{remark}[Proof strategy --- quadratic version]\label{rem:strategy-quadratic} 
The strategy to prove \up s for quadratic \tfr s is similar to the
one presented in Section \ref{rem:strategy}, but  in the second step,
in \eqref{eq:free-to-stft}, we replace  the partial \stft\ with a
\emph{partial Fourier transform} which we recall next:

Let  $1\leq k<d$, the partial \ft \ $\F _k$  is defined as 
\begin{equation}\label{eq:partial_ft_action}
    \begin{split}
        \F_k f (\omega) 
     = \int_\Rk f(x_1,\omega_2) e^{-2\pi i \omega_1\cdot x_1}\,
    dx_1 
    = \F(f_{\omega_2})(\omega _1)
    = \hat{f_{\omega_2}}(\omega _1) ,\\
    \qquad\qquad \qquad\qquad\qquad\qquad\qquad
    \omega=(\omega_1,\omega _2)\in\Rk\times \R^{d-k},\ f\in L^1(\rd ).
    \end{split}
\end{equation}
This transform is a  metaplectic operator, namely, 
$$
\F_k =  \F\otimes \mathrm{id}_{L^2(\R^{d-k})}=\hat\Ro_{\diag(iI_k,
  I_{d-k})}\in\Mpp \, ,
$$
and therefore the partial \ft\  $\F _k$  extends
to a unitary operator on $\ltd $.  Note
that the restriction $f_{\omega _2}$ has $k$ free variables, therefore
we can interpret 
$\F _k$ as the  usual Fourier transform and  write $\F _k
f(\omega _1, \omega _2) = \widehat{f_{\omega _2}}(\omega _1)$ without ambiguity. 
If $k=d$, then we write $\F_d=\F$.
\end{remark}
\subsection{Uncertainty principles for the partial Fourier transform}
We now present a Beurling-type and a Hardy-type \up.  

This time, we begin with Hardy's \up, as its proof is 
straightforward. 

\begin{theorem}\label{thm:hardy_partial_pairs}
    Let $1\leq k\leq d$. If $f\in\ltd$ satisfies
\begin{equation}\label{eq:hardy_partial_pairs_1} 
        |f(x)|\leq c e^{-\pi \alpha \norm{x}^2},\quad 
        |\F_k f(\omega)|\leq  ce^{-\pi \beta \norm{\omega}^2 },\qquad \text{for a.a. }x,\omega\in\Rd 
    \end{equation}    
    for some constants $c>0$ and $\alpha\beta>1$, then $f\equiv 0$.
\end{theorem}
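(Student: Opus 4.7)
The plan is to reduce to the classical Hardy uncertainty principle on $\R^k$, applied to the slices $f_{x_2}(x_1) = f(x_1, x_2)$. The key observations are the identity $\F_k f(\omega_1,\omega_2) = \widehat{f_{\omega_2}}(\omega_1)$ from \eqref{eq:partial_ft_action} and the orthogonal splitting $\|x\|^2 = \|x_1\|^2 + \|x_2\|^2$ for $x = (x_1,x_2) \in \R^k \times \R^{d-k}$. This is the natural analogue, for the partial Fourier transform, of the slicing argument used in Theorem~\ref{thm:beurling_partial_STFT}.

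If $k = d$ there is nothing to prove, so I would assume $1 \leq k < d$. By Fubini's theorem, $f_{x_2} \in L^2(\R^k)$ for almost every $x_2 \in \R^{d-k}$. The first hypothesis in \eqref{eq:hardy_partial_pairs_1}, combined with the splitting of the norm, yields
\[
|f_{x_2}(x_1)| \leq c\, e^{-\pi \alpha \|x_2\|^2}\, e^{-\pi \alpha \|x_1\|^2},
\]
so off a null set of $x_2$ this holds for a.e.\ $x_1 \in \R^k$. Similarly, since in $\F_k f(\omega_1,\omega_2)$ the variable $\omega_2$ labels the slice (i.e.\ plays the role of the inert variable $x_2$), the second hypothesis reads
\[
|\widehat{f_{\omega_2}}(\omega_1)| \leq c\, e^{-\pi \beta \|\omega_2\|^2}\, e^{-\pi \beta \|\omega_1\|^2},
\]
and again off a null set of $\omega_2$ this holds for a.e.\ $\omega_1$. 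Taking the union of the two exceptional null sets, the pair $(f_{x_2}, \widehat{f_{x_2}})$ on $\R^k$ satisfies Gaussian decay with parameters $\alpha,\beta$ for almost every $x_2$.

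Since $\alpha\beta > 1$, the classical Hardy uncertainty principle in dimension $k$ forces $f_{x_2} \equiv 0$ for almost every $x_2 \in \R^{d-k}$, and Lemma~\ref{lem:measure_theory}(i) then yields $f \equiv 0$. The only real subtlety I foresee is the measure-theoretic bookkeeping of the two "almost everywhere" conditions, ensuring one can find a common full-measure set of slices on which both the spatial and frequency-side Gaussian estimates hold simultaneously; this is a routine Fubini argument. After that, the proof is essentially immediate from the classical Hardy uncertainty principle on $\R^k$, so no deeper machinery is required.
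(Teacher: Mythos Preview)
Your proposal is correct and follows essentially the same argument as the paper: slice in the last $d-k$ variables, use Fubini to get a common full-measure set of $x_2$'s on which both Gaussian bounds hold for $(f_{x_2},\widehat{f_{x_2}})$, apply the classical Hardy uncertainty principle on $\R^k$, and conclude via Lemma~\ref{lem:measure_theory}(i). The only point the paper makes slightly more explicit is naming the two null sets $S_x,S_\omega$ and working on $\R^{d-k}\setminus(S_x\cup S_\omega)$, which is exactly the ``routine Fubini argument'' you anticipate.
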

\begin{proof}[Proof of Theorem \ref{thm:hardy_partial_pairs}]
If $k=d$, this is Hardy's \up\ \cite{Hardy1933}, so  we assume $1\leq k<d$.
The assumption, combined  with Fubini's and Plancherel's theorem,
implies the following:  
there exist sets $S_x,S_\omega\subseteq\R^{d-k}$ of measure zero such
that for fixed  $x_2\in\R^{d-k}\setminus S_x, \omega_2 \in\R^{d-k}\setminus S_\omega$, 
the restrictions $f_{x_2}, \widehat{f_{\omega_2}} = (\F_k
f)_{\omega_2}$ lie in $L^2(\Rk)$,   and 
\begin{equation}\label{eq:hardy_partial_pairs}
\begin{split}
        |f(x_1,x_2)|\leq  c e^{-\pi \alpha \norm{x_1}^2},\quad \text{for a.a. } x_1\in\Rk,
        \\ 
        |\F_k f(\omega_1,\omega_2)|\leq  ce^{-\pi \beta
          \norm{\omega_1}^2},\quad  \text{for all  } \omega_1\in\Rk.
\end{split}
    \end{equation}
Then for $x_2\in \R^{d-k}\setminus (S_x\cup S_\omega)$ we have 
\begin{equation}\label{eq:hardy_partial_pairs_Step}
     \begin{split}
            |f_{x_2}(x_1)|&= |f(x_1,x_2)|\leq c e^{-\pi \alpha \norm{x_1}^2},\quad \text{for a.a. } x_1\in\Rk,\\ 
        |\widehat{(f_{x_2})}(\omega_1)| & = |\F_k f(\omega_1,x_2)|\leq  ce^{-\pi \beta \norm{\omega_1}^2} \quad  \text{for a.a. } \omega_1\in\Rk.
     \end{split}
    \end{equation}
    Hardy's \up\ for the Fourier pair $(f_{x_2}, \hat{f}_{x_2})$ now
    implies  $f_{x_2}\equiv 0$  for  $x_2\in \R^{d-k}\setminus (S_x\cup S_\omega)$.
Since $ S_x\cup S_\omega$ is a set of measure zero, Lemma
\ref{lem:measure_theory} (i) implies that $f\equiv 0$, as claimed. 
\end{proof}
The case $\alpha \beta =1$ will be treated in Section~\ref{sec:quantitative}.

Next, we formulate a Beurling-type theorem for the partial \ft . Its
proof is trickier. 
\begin{theorem}\label{thm:beurling_partial_pairs} 
     Let $1\leq k \leq d$ and $f\in\ltd$. If there exists an $0\leq N \leq k$ such that
\begin{equation}\label{eq:beurling_FT_partial_L1}
        \int_{\R^{2d}} \frac{|f (x) \F_k f(\omega)|}{(1+\norm{x}+\norm{\omega})^N}e^{2\pi |x\cdot \omega|}\, dx d\omega <\infty,
    \end{equation}
    then $f\equiv 0$ or $g\equiv 0$.
\end{theorem}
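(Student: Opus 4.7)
The plan is to apply the strategy of Section~\ref{rem:strategy} with the partial Fourier transform playing the role of the partial STFT, mirroring as closely as possible the proof of Theorem~\ref{thm:beurling_partial_STFT}. If $k=d$, then $\F_k=\F$ and \eqref{eq:beurling_FT_partial_L1} is exactly the classical Beurling condition for the Fourier pair $(f,\hat f)$ on $\R^d$; since $N\le d$, the Bonami--Demange--Jaming theorem (the Fourier-pair ancestor of Theorem~\ref{thm:beurling_STFT}) immediately yields $f\equiv 0$.

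For $1\le k<d$, use the identifications $f(x)=f_{x_2}(x_1)$ and $\F_k f(\omega)=\widehat{f_{\omega_2}}(\omega_1)$, and observe that for almost every $c\in\R^{d-k}$ both $f_c$ and $\widehat{f_c}=(\F_k f)(\cdot,c)$ belong to $L^2(\R^k)$ (Fubini and Plancherel), so that $(f_c,\widehat{f_c})$ is a genuine Fourier pair on $\R^k$. The target is to show, for a.e.\ $c$, the sliced Beurling condition
$$
\int_{\R^{2k}}\frac{|f_c(x_1)\,\widehat{f_c}(\omega_1)|}{(1+\|x_1\|+\|\omega_1\|)^N}\,e^{2\pi|x_1\cdot\omega_1|}\,dx_1\,d\omega_1<\infty;
$$
once this is established, the classical Beurling theorem on $\R^k$ (applicable because $N\le k$) forces $f_c\equiv 0$ for a.e.\ $c$, and Lemma~\ref{lem:measure_theory}(i) concludes $f\equiv 0$.

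The reduction itself would combine Fubini's theorem with the weight splitting $(1+\|x\|+\|\omega\|)^N\le (1+\|x_1\|+\|\omega_1\|)^N(1+\|x_2\|+\|\omega_2\|)^N$ and the triangle inequality $|x_1\cdot\omega_1|\le |x\cdot\omega|+|x_2\cdot\omega_2|$, exactly as in the proof of Theorem~\ref{thm:beurling_partial_STFT}. These bounds transfer \eqref{eq:beurling_FT_partial_L1} to an integrability statement for $|f_{x_2}(x_1)\,\widehat{f_{\omega_2}}(\omega_1)|$ weighted by $(1+\|x_1\|+\|\omega_1\|)^{-N}e^{2\pi|x_1\cdot\omega_1|}$, with an absorbed outer factor in $(x_2,\omega_2)$.

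The main obstacle I expect is isolating the \emph{diagonal} case $x_2=\omega_2=c$: Fubini in the outer variables $(x_2,\omega_2)\in\R^{2(d-k)}$ only yields the slice bound on a full-measure subset, while the diagonal $\{(c,c)\}$ is a Lebesgue null set, so the conclusion cannot be read off naively. Bridging this gap is the technical heart of the argument; I would attempt it by fixing a generic $\omega_2^\ast$ for which $\widehat{f_{\omega_2^\ast}}\in L^2(\R^k)$ is non-trivial, using the derived slice bound for a.e.\ $x_2$, and then exploiting the strong continuity of the translation $\omega_2\mapsto \widehat{f_{\omega_2}}$ as an $L^2(\R^k)$-valued function in the last $d-k$ variables to pass to $x_2=\omega_2^\ast$. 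Since this "diagonal transfer" is the step that differs from the STFT proof (where the slice is automatically a genuine STFT pair for every $(x_2,\omega_2)$), all the real work lies there; the remaining book-keeping is routine.
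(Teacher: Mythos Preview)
Your diagnosis of the difficulty is exactly right: Fubini only gives finiteness of the sliced integral for almost every $(x_2,\omega_2)\in\R^{2(d-k)}$, and the diagonal $\{x_2=\omega_2\}$ is null, so one cannot read off the classical Beurling condition for the Fourier pair $(f_c,\widehat{f_c})$. However, your proposed bridge---``strong continuity of the translation $\omega_2\mapsto\widehat{f_{\omega_2}}$''---does not work. The map $c\mapsto f_c$ is the \emph{restriction} (slicing) map, not a translation; for a generic $f\in L^2(\R^d)$ it is only defined almost everywhere and is merely measurable from $\R^{d-k}$ to $L^2(\R^k)$, with no continuity whatsoever (take $f=\chi_E$ for a bounded measurable $E$). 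Even granting some mode of convergence of slices, the Beurling condition is an $L^1$ bound against the exponentially \emph{growing} weight $e^{2\pi|x_1\cdot\omega_1|}$; neither $L^2$ nor a.e.\ convergence of the integrand furnishes an upper bound, so Fatou-type arguments point the wrong way. The diagonal transfer, as you outline it, has a genuine gap.

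The paper avoids the diagonal entirely by \emph{not} reducing to a Fourier pair. It invokes a two-function Beurling theorem of Demange \cite[Cor.~3.7]{Demange2005}: if for $g,h\in L^2(\R^k)$ one has
\[
\int_{\R^{2k}}\frac{|g(x_1)h(\omega_1)|+|\hat g(x_1)\hat h(\omega_1)|}{(1+\|x_1\|+\|\omega_1\|)^N}\,e^{2\pi|x_1\cdot\omega_1|}\,dx_1\,d\omega_1<\infty,
\]
then $g\equiv 0$ or $h\equiv 0$. Apply this with $g=f_{x_2}$ and $h=(\F_k f)_{\omega_2}=\widehat{f_{\omega_2}}$, so that $\hat g=(\F_k f)_{x_2}$ and $\hat h(\cdot)=f_{\omega_2}(-\,\cdot)$. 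Your Fubini slice bound controls the $|gh|$-term off a null set $S$; the $|\hat g\,\hat h|$-term is the \emph{same} slice bound evaluated at the swapped point $(\omega_2,x_2)$, so after symmetrizing the exceptional set to $\tilde S=S\cup\{(\omega_2,x_2):(x_2,\omega_2)\in S\}$ both terms are finite simultaneously. Demange's corollary yields $f_{x_2}\otimes(\F_k f)_{\omega_2}\equiv 0$ for a.e.\ $(x_2,\omega_2)$, and Lemma~\ref{lem:measure_theory}(ii) finishes. The key idea you are missing is to upgrade the black-box uncertainty principle to one that accepts two \emph{unrelated} functions, rather than to force $x_2=\omega_2$.
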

\begin{proof}
    Since $k=d$ is Beurling's \up\ \cite{BeurlingCollected1989,hormander91}, we assume that $1\leq k<d$. 
    We start as in
    Theorem~\ref{thm:beurling_partial_STFT}. Estimate~\eqref{eq:n5}
    implies that
    \begin{align}
\lefteqn{ \int_{\R^{2k}} \frac{|f (x_1,x_2) \F_k
     f(\omega_1,\omega_2)|}{(1+\norm{x_1}+\norm{\omega_1})^N}e^{2\pi
      |x_1\cdot \omega_1|}\, dx_1 d\omega_1} \\
      &\leq e^{2\pi |x_2\cdot
     \omega _2|} (1+\|x_2\|+\|\omega _2\|)^N \, 
    \int_{\R^{2k}} \frac{|f (x_1,x_2) \F_k
      f(\omega_1,\omega_2)|}{(1+\norm{x}+\norm{\omega})^N}e^{2\pi
      |(x_1,x_2)\cdot (\omega_1,\omega_2)|}\, dx_1 d\omega_1 \, ,       \label{eq:beurling_FT_partial}
    \end{align}
  and the last integral is finite for almost all $(x_2,\omega _2) \in
  \bR ^{2(d-k)}$ by     Fubini's theorem and  condition
  \eqref{eq:beurling_FT_partial_L1}. Call the exceptional set $S$. 
Whereas in \eqref{eq:hardy_partial_pairs_Step} we could use $x_2=\omega _2$, these variables are
now independent. To settle this difficulty, we resort to a Beurling-type \up\ for pairs of functions. It was shown in \cite[Cor.~3.7.]{Demange2005} that
    \begin{equation}
        \int_{\R^{2d}} \frac{{|g(x)h(\omega)|+|\hat g(x)\hat h(\omega)|}}{(1+\norm{x}+\norm{\omega})^N}e^{2\pi |x\cdot \omega|}\, dxd\omega<\infty \quad \implies \quad g\equiv 0 \text{ or } h\equiv 0.
    \end{equation} 
    We apply this result to the functions $g = f_{x_2}$ and $h = (\F_k f)_{\omega_2}=
    \widehat{f_{\omega _2}}$. Let 
    \begin{equation}
        \widetilde S = S\cup \{(\omega_2,x_2) : (x_2,\omega_2)\in  S \}\subseteq\R^{2(d-k)}.
    \end{equation} 
    This is also a set of measure zero. 
Then we obtain for all $(x_2,\omega_2)\in \R^{2(d-k)}\setminus \widetilde S$ 
    \begin{equation}
        \begin{split}
            & \phantom{=} \int_{\R^{2d}} \frac{{|f_{x_2}(x_1)(\F_k f)_{\omega_2}(\omega_1)|+|\widehat {f_{x_2}}(x_1)\widehat {(\F_k f)_{\omega_2}}(\omega_1)|}}{(1+\norm{x_1}+\norm{\omega_1})^N}e^{2\pi |x_1\cdot \omega_1|}\, dx_1d\omega_1 \\
            & = \int_{\R^{2d}} \frac{{|f_{x_2}(x_1)(\hat{f}_{\omega
                  _2})(\omega_1)|+|(\hat{
                  f}_{x_2}(x_1)f_{\omega_2}(-\omega_1)|}}{(1+\norm{x_1}+\norm{\omega_1})^N}e^{2\pi
              |x_1\cdot \omega_1|}\, dx_1d\omega_1 <\infty \, .
        \end{split}
    \end{equation}
    By\cite[Cor.~3.7.]{Demange2005}, $f_{x_2} \equiv 0$ or $(\F_k
    f)_{\omega_2}\equiv 0$. 
    Equivalently, $f_{x_2}\otimes (\F_k f)_{\omega_2}$ for all $(x_2,\omega_2)\in \R^{2(d-k)}\setminus \widetilde S$.
    Since $\widetilde S$ has measure zero, Lemma \ref{lem:measure_theory} (ii) implies $f\otimes \F_k f\equiv 0$. 
    As $\F_k$ is an invertible operator, this implies that $f\equiv 0$, as desired.
    \end{proof}

\subsection{Uncertainty principles for pairs of metaplectic operators and quadratic metaplectic representations}
In this subsection, we discuss uncertainty principles for $(f, \hat{\Ro}_V f)$, $V\in\mathrm{U}(d,\C)$ if $V\notin \mathrm{O}(d,\R)$. 

\begin{lemma} \label{lem:pair-to-partial}
      Let $\Ro_V\in\Spp$ and assume that $V\in U(d,\C )$ is not real-valued. Then there exist matrices 
      $\Omega
      \in\mathrm{GL}(2d,\R)$ and $\B
      \in\Spp$, such that 
      \begin{equation}
          |f|\otimes |\hat\Ro_V f|
          = \hat\D_{\Omega}\left(|\hat\B f| \otimes | \F_k \hat\B f|\right). 
      \end{equation}
      The matrices $\Omega$ and $\B$ 
      and the index $1\leq k\leq d$ can be determined explicitly.
\end{lemma}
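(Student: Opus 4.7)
The plan is to carry out the reduction anticipated in Remark~\ref{rem:strategy-quadratic}: factor $\Ro_V$ into elementary symplectic matrices so that a Fourier block acting on a $2k$-dimensional subspace appears explicitly, then absorb all remaining factors into a change of variables $\Omega$ and an auxiliary metaplectic $\hat\B$.

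First I would normalize $V$ using the singular value decomposition $V_2 = P\Sigma Q^t$ with $P,Q\in\mathrm{O}(d,\R)$ and $\Sigma = \mathrm{diag}(\Sigma_k,0)$, $\Sigma_k$ diagonal and invertible, $k=\mathrm{rank}\,V_2\geq 1$ (available because $V_2\neq 0$). By \eqref{eq:DVRoV}, $\Ro_V = \D_P\Ro_{\tilde V}\D_{Q^t}$ with $\tilde V := P^tVQ = (P^tV_1Q) + i\Sigma$ again unitary. Writing $\tilde V^*\tilde V = \tilde V\tilde V^* = I$ in $k\oplus(d-k)$ block form and exploiting the block structure of $\Sigma$ forces $P^tV_1Q$ to be block-diagonal, $P^tV_1Q = V_{1k}\oplus W$ with $V_{1k}\in\R^{k\times k}$ and $W\in\mathrm{O}(d-k,\R)$. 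Hence $\tilde V = V_k\oplus W$ where $V_k := V_{1k}+i\Sigma_k\in\mathrm{U}(k,\C)$ has \emph{invertible} imaginary part, and by \eqref{eq:op_embed} one has $\hat\Ro_{\tilde V} = \hat\Ro_{V_k}\otimes\hat\Ro_W = \hat\Ro_{V_k}\otimes\hat\D_W$.

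The crucial algebraic step is the factorization, valid whenever the imaginary part $V_{2k}$ of a unitary $V_k=V_{1k}+iV_{2k}\in\mathrm{U}(k,\C)$ is invertible:
\[\Ro_{V_k} = \V_{V_{1k}V_{2k}^{-1}}\,\D_{V_{2k}}\,\J\,\V_{V_{2k}^{-1}V_{1k}},\]
where $\J$ denotes the $2k$-dimensional standard symplectic Fourier matrix. Both $V_{1k}V_{2k}^{-1}$ and $V_{2k}^{-1}V_{1k}$ are symmetric, because $V_k^*V_k=I$ implies $V_{1k}^tV_{2k}=V_{2k}^tV_{1k}$, so the right-hand side lives in $\mathrm{Sp}(2k,\R)$; the identity itself is checked by direct $2\times 2$ block multiplication, using $V_{1k}^tV_{1k}+V_{2k}^tV_{2k}=I$ at the last step. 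Passing to metaplectic operators (up to an inessential phase), tensoring with $\hat\D_W$, and using $\F\otimes\mathrm{id}_{L^2(\R^{d-k})}=\F_k$ then gives
\[\hat\Ro_{\tilde V} = \hat\V_{Q_1\oplus 0}\,\hat\D_{V_{2k}\oplus I}\,\F_k\,\hat\V_{Q_2\oplus 0}\,\hat\D_{I\oplus W},\]
with $Q_1=V_{1k}V_{2k}^{-1}$ and $Q_2=V_{2k}^{-1}V_{1k}$.

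To finish, I set $\hat\B := \hat\V_{Q_2\oplus 0}\,\hat\D_{I\oplus W}\,\hat\D_{Q^t}$, a product of metaplectic generators (hence $\B\in\Spp$), and $g := \hat\B f$. Plugging the displayed factorization into $\hat\Ro_V f = \hat\D_P\hat\Ro_{\tilde V}\hat\D_{Q^t}f$ and observing that $\hat\V$-factors only contribute unimodular phases, one obtains $|\hat\Ro_V f|(y) = |\det L|^{-1/2}|\F_k g|(L^{-1}y)$ with $L := P(V_{2k}\oplus I)\in\mathrm{GL}(d,\R)$, and likewise $|f(x)| = |g(M^{-1}x)|$ with $M := Q(I\oplus W^{-1})\in\mathrm{O}(d,\R)$. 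Setting $\Omega := \begin{psmallmatrix}M & 0 \\ 0 & L\end{psmallmatrix}\in\mathrm{GL}(2d,\R)$ and noting $|\det\Omega|=|\det L|$ (since $M$ is orthogonal), a short Jacobian check yields
\[|f|\otimes|\hat\Ro_V f| = \hat\D_\Omega\bigl(|\hat\B f|\otimes|\F_k\hat\B f|\bigr).\]
The main obstacle is the meticulous bookkeeping of the coordinate changes as they propagate through the tensor decomposition; the genuinely nontrivial insight is that the displayed factorization of $\Ro_{V_k}$ isolates the Fourier transform precisely on the $k$ coordinates where $V$ is genuinely complex.
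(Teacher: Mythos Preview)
Your argument is correct. The overall strategy matches the paper's --- isolate the coordinates where $V$ is genuinely complex and expose a (partial) Fourier transform via the free decomposition --- but you reach the block structure by a different normalization. The paper invokes the F\"uhr--Rzeszotnik factorization $V=W_1\Sigma W_2$ with $\Sigma$ a \emph{diagonal unitary} matrix, then tensorizes $\hat\Ro_\Sigma$ into one-dimensional fractional Fourier transforms and factors each via $\Ro_\sigma=\V_{\re\sigma/\im\sigma}\D_{\im\sigma}\J\V_{\re\sigma/\im\sigma}$; this yields $\B=\V_C\D_{W_2}$ and $\Omega=\diag(W_2^t,W_1B)$. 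You instead take the ordinary real SVD of $\im V$ and deduce the block-diagonality of $P^tVQ$ from the unitarity constraints, then apply the free decomposition once to the whole $k$-block $\Ro_{V_k}$. Your route is slightly more elementary (no specialized unitary SVD needed) and produces a single $k$-dimensional free factorization rather than $k$ scalar ones; the paper's route gives cleaner explicit formulas for $\B$ and $\Omega$ with diagonal ingredients. One small remark: your symmetry claim for $Q_1$ and $Q_2$ uses both $V_k^*V_k=I$ and $V_kV_k^*=I$ (the latter for $Q_1$), not just the first.
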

\begin{proof}
By a version of a singular value decomposition
\cite[Thm.~5.1.]{FuerRzeszotnik2018}, there exist orthogonal matrices
$W_1,W_2\in\mathrm{O}(d,\R )$ and a diagonal unitary matrix $\Sigma =
\mathrm{diag}\, (\sigma _{1\, 1}, \dots , \sigma _{d\, d})
\in\mathrm{U}(d,\C)$ with $V = W_1 \Sigma W_2$.  
Since $V$ is not real-valued by assumption, neither is $\Sigma$, and
after a suitable permutation and multiplication with a diagonal
orthogonal matrix, we may  assume 
\begin{equation}\label{eq:sorted_imaginary_diag}
    \begin{split}
        \im \sigma_{1\,1}\geq \dots \geq \im \sigma_{k\, k} >0 &= \im \sigma_{k+1\ k+1} = \dots = \im \sigma_{d\,d},\ \text{and} \\
        1 & = \re \sigma_{k+1\ k+1} = \dots = \re \sigma_{d\,d},
    \end{split}
\end{equation}
where $k =\mathrm{rank}(\im(V))$.
By \eqref{eq:op_embed}, we can tensorize the action of $\hat\Ro_\Sigma$ as
    \begin{equation}\label{eq:decomp_fractional_multi}
        \hat\Ro_{\Sigma} = \bigotimes\limits_{n=1}^d \hat\Ro_{\sigma_{nn}}.
    \end{equation}
Note that in dimension $d=1$ for  $\sigma\in\T$,
$\hat\Ro_{\sigma}\in\mathrm{Mp}(2,\R)$ is the fractional Fourier
transform \cite{Namias1980,Wiener1929,Folland1989,OzaktasKutay2001}. 
If $\sigma \neq \pm 1$, we can decompose $\Ro_\sigma$ as
\begin{equation}
    \Ro_\sigma = \V_{\re(\sigma)/\im(\sigma)} \D_{\im(\sigma)} \J \V_{\re(\sigma)/\im(\sigma)},
\end{equation} 
see \cite{Jaming2022}, also
\cite[Sec.~4.1.]{GroechenigShafkulovska2024}.  Thus the fractional
\ft\ acts as  a product of  unimodular multipliers $\V
_{\re(\sigma)/\im(\sigma)}$ and the usual \ft.
If $\sigma=1$, then $\Ro_1 = I_2\in\mathrm{GL}(2,\R)$ is the identity matrix.
Using  \eqref{eq:decomp_fractional_multi}   and \eqref{eq:sorted_imaginary_diag}, we obtain the analogous multivariate factorization 
\begin{equation}
    \Ro_\Sigma = \V_C \D_B \Ro_{\diag(i I_k, I_{d-k})} \V_C,
\end{equation}
where $B,C\in\R^{d\times d}$ are the diagonal matrices 
\begin{equation}
    b_{nn} = \begin{cases}
        \im(\sigma_{nn}),&\quad 1\leq n\leq k,\\
        1, & \quad n>k,
    \end{cases},
    \quad 
      c_{nn} = \begin{cases}
        \re(\sigma_{nn})/\im(\sigma_{nn}),&\quad 1\leq n\leq k,\\
        0, & \quad n>k.
    \end{cases}
\end{equation}
Since $|\hat V_C f| = |f|$, we can write
\begin{equation}
    \begin{split}
        |f|\otimes |\hat\Ro_V f| 
        & = |f|\otimes |\hat\D_{W_1}\hat{\Ro }_\Sigma \hat\D_{W_2} f| \\
        & = |\hat{\D }_{W_2}^{-1}\hat{\D }_{W_2}f|\otimes |\hat\D_{W_1}\hat\V_C \hat\D_B\hat\Ro_{\diag(iI_k, I_{d-k})} \hat\V_C\hat\D_{W_2} f| \\
        & = \hat{\D}_{\diag(W_2^t, W_1)}
        \left(
        |\hat\V_C \hat \D_{W_2} f|\otimes |\hat\D_B\hat\Ro_{\diag(iI_k, I_{d-k})} \hat\V_C\hat\D_{W_2} f|
        \right).
    \end{split}
\end{equation}
Now set $\Omega = \diag(W_2^t, W_1 B)\in \mathrm{GL}(2d,\R)$ and $\B
= \V_C \D_{W_2}\in \mathrm{Sp}(2d,\R )$.
\end{proof}
Just as in the case of the sesquilinear forms, the combination of  the
factorization of  Lemma \ref{lem:pair-to-partial} with the \up s of
Theorems \ref{thm:beurling_partial_pairs} and
\ref{thm:hardy_partial_pairs} generates \up s for pairs of metaplectic
operators. 
Equivalently, by Remark \ref{rem:quadratic-to-pair}, these
can be formulated as \up s for quadratic metaplectic \tf \ representations.  

\begin{theorem}[Beurling-type]\label{thm:beurling_pairs} 
Let $\Ro_V\in\Spp$ and $\Omega = \Omega(V)$ be the matrix introduced
in Lemma \ref{lem:pair-to-partial}. Assume that $V\in U(d,\C)$ is not real-valued. If $f\in\ltd$ satisfies
\begin{equation}\label{eq:beurling_pair_1} 
     \int_{\Rdd} |(f\otimes \hat\Ro_V f)(\lambda)|e^{ \pi|\Omega^{-1} \lambda\cdot  \begin{psmallmatrix}
            0 & I_d \\ I_d & 0
        \end{psmallmatrix} \Omega^{-1} \lambda|}\, d\lambda <\infty ,
\end{equation}
then $f\equiv 0$. 
\end{theorem}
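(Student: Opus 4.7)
The plan is to mirror the proof of Theorem \ref{thm:beurling_WRu}, replacing the reduction of a sesquilinear metaplectic representation to a partial \stft\ with the analogous reduction of the pair $(f, \hat\Ro_V f)$ to a pair involving the partial Fourier transform, provided by Lemma \ref{lem:pair-to-partial}. The end result is then obtained by appealing to the Beurling-type \up\ for the partial \ft\ (Theorem \ref{thm:beurling_partial_pairs}) with $N=0$.

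First, since $V\in U(d,\C)$ is assumed not to be real-valued, Lemma \ref{lem:pair-to-partial} provides an index $1\le k\le d$, a symplectic matrix $\B\in\Spp$, and an invertible $\Omega\in\mathrm{GL}(2d,\R)$ with
\begin{equation*}
    |f|\otimes |\hat\Ro_V f|
    = \hat\D_{\Omega}\bigl(|\hat\B f|\otimes |\F_k \hat\B f|\bigr).
\end{equation*}
In particular, for a.e.\ $\lambda\in\Rdd$,
\begin{equation*}
|(f\otimes \hat\Ro_V f)(\lambda)|
= |\det \Omega|^{-1/2}\,|(\hat\B f)(x)|\,|(\F_k \hat\B f)(\omega)|\quad\text{with}\quad (x,\omega):=\Omega^{-1}\lambda.
\end{equation*}

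Next, I would perform the change of variables $\lambda=\Omega\mu$ in the integral \eqref{eq:beurling_pair_1}. Writing $\mu=(x,\omega)\in\Rd\times\Rd$ and using the identity \eqref{eq:beurling_joint}, namely $x\cdot\omega=\tfrac12\mu\cdot\begin{psmallmatrix}0 & I_d\\ I_d & 0\end{psmallmatrix}\mu$, the exponential weight in \eqref{eq:beurling_pair_1} becomes precisely $e^{2\pi|x\cdot\omega|}$. The Jacobian contributes only a harmless constant factor $|\det\Omega|^{1/2}$, so the hypothesis translates into
\begin{equation*}
\int_{\Rdd} |(\hat\B f)(x)|\,|(\F_k \hat\B f)(\omega)|\, e^{2\pi|x\cdot\omega|}\, dx\, d\omega < \infty.
\end{equation*}

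Setting $h:=\hat\B f\in\ltd$, this is exactly the condition \eqref{eq:beurling_FT_partial_L1} of Theorem \ref{thm:beurling_partial_pairs} with $N=0$. That theorem therefore yields $h\equiv 0$. Since $\hat\B$ is unitary, this is equivalent to $f\equiv 0$, as claimed.

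I do not expect any genuine obstacle here: all the conceptual work was done in Lemma \ref{lem:pair-to-partial} (the delicate singular-value decomposition that isolates the genuinely complex part of $V$) and in Theorem \ref{thm:beurling_partial_pairs} (the pair version of Demange's \up). The remaining task is bookkeeping — matching the factors of $2$ in the exponent via \eqref{eq:beurling_joint}, and tracking the coordinate change through $\Omega$. The only point worth flagging is that the non-real-valuedness of $V$ guarantees $k\ge 1$, so that $\F_k$ genuinely contains a Fourier transform in at least one variable; this is exactly the hypothesis needed to apply Theorem \ref{thm:beurling_partial_pairs}.
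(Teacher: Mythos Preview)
Your proposal is correct and follows essentially the same approach as the paper: reduce via Lemma \ref{lem:pair-to-partial}, perform the change of variables $\lambda=\Omega\mu$ to transform the hypothesis into the Beurling condition \eqref{eq:beurling_FT_partial_L1} for the pair $(\hat\B f,\F_k\hat\B f)$, apply Theorem \ref{thm:beurling_partial_pairs} with $N=0$, and conclude by unitarity of $\hat\B$. The paper's proof is more terse but the logical steps are identical.
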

\begin{proof}
    The proof follows the same pattern as Theorem \ref{thm:beurling_WRu}. 
    In the notation of Lemma \ref{lem:pair-to-partial}, the hypothesis \eqref{eq:beurling_pair_1} is equivalent to 
    \begin{equation}\label{eq:beurling_pair_2}
     \int_{\Rdd} |(\hat\B f\otimes \F_k\hat \B f)(\lambda)|e^{\pi|\lambda\cdot \begin{psmallmatrix}
            0 & I_d \\ I_d & 0
        \end{psmallmatrix}\lambda|}\, d\lambda <\infty.
\end{equation}
Noting that the exponent equals $2\pi |x\cdot \omega |$, we  apply Theorem \ref{thm:beurling_partial_pairs} and obtain $\hat\B
f\equiv 0$, whence  $f\equiv 0$.
\end{proof}

\begin{theorem}[Hardy-type]\label{thm:hardy_pairs_1}
Let $\Ro_V\in\Spp$ and $\Omega = \Omega(V)$ be the matrix introduced in Lemma \ref{lem:pair-to-partial}. 
Assume that $V$ is not real-valued. If $f\in\ltd$ satisfies
\begin{equation}\label{eq:hardy_pair_1}
     |(f\otimes \hat\Ro_V f)(\lambda)| \leq c e^{ - \pi \alpha |\Omega^{-1} \lambda \cdot \diag(\alpha I_d,\beta I_d) \Omega^{-1} \lambda|}\quad \text{for a.a. }\lambda\in\Rdd
\end{equation}
for some $\alpha,\beta, c>0$ with $\alpha\beta>1$, 
then $f\equiv 0$. 
\end{theorem}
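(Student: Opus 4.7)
The plan is to mirror the reduction used in the proof of Theorem~\ref{thm:beurling_pairs}, but funnel the hypothesis into the Hardy-type uncertainty principle for pairs (Theorem~\ref{thm:hardy_partial_pairs}) instead of the Beurling version. Thus one factors $|f|\otimes|\hat\Ro_V f|$ through Lemma~\ref{lem:pair-to-partial} to transfer the Gaussian bound \eqref{eq:hardy_pair_1} to a bound on $\hat\B f$ and $\F_k \hat\B f$, separates the resulting tensor-product estimate into individual Gaussian bounds on each factor, and then invokes Theorem~\ref{thm:hardy_partial_pairs}. In view of the identity
$$\Omega^{-1}\lambda \cdot \diag(\alpha I_d,\beta I_d)\, \Omega^{-1}\lambda = \alpha\|\mu_1\|^2 + \beta\|\mu_2\|^2,$$
where $\mu = \Omega^{-1}\lambda = (\mu_1,\mu_2) \in \Rd\times\Rd$, the change of variables is clean, and the constants $\alpha,\beta$ survive the reduction with $\alpha\beta>1$ intact.

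\textbf{Step 1 (transport).} By Lemma~\ref{lem:pair-to-partial}, $|f|\otimes|\hat\Ro_V f| = \hat\D_\Omega\bigl(|\hat\B f|\otimes|\F_k\hat\B f|\bigr)$. Substituting $\lambda = \Omega\mu$ in \eqref{eq:hardy_pair_1} and absorbing $|\det\Omega|^{1/2}$ into the constant $c$, the hypothesis becomes
$$|\hat\B f(\mu_1)|\cdot|\F_k\hat\B f(\mu_2)| \leq c'\, e^{-\pi(\alpha\|\mu_1\|^2 + \beta\|\mu_2\|^2)} \qquad \text{for a.a. } (\mu_1,\mu_2)\in\Rdd.$$
\textbf{Step 2 (separation).} Suppose, for contradiction, $f\not\equiv 0$. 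Since $\hat\B$ and $\F_k$ are unitary, both $\hat\B f$ and $\F_k\hat\B f$ fail to vanish on sets of positive measure. By Fubini's theorem, the a.e.\ exceptional set of the tensor inequality has zero-measure slices for a.e.\ value of the complementary variable, so one may select $\mu_2^\ast$ with $|\F_k\hat\B f(\mu_2^\ast)|>0$ (and similarly $\mu_1^\ast$) such that the bound persists as a function of the remaining variable. Dividing by the chosen value yields
$$|\hat\B f(\mu_1)|\leq C_1\, e^{-\pi\alpha\|\mu_1\|^2}, \qquad |\F_k\hat\B f(\mu_2)|\leq C_2\, e^{-\pi\beta\|\mu_2\|^2},$$
for a.e.\ $\mu_1,\mu_2\in\Rd$.

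\textbf{Step 3 (conclude).} Since $\alpha\beta>1$, Theorem~\ref{thm:hardy_partial_pairs} applied to $\hat\B f\in\ltd$ gives $\hat\B f\equiv 0$, and unitarity of $\hat\B$ forces $f\equiv 0$, contradicting the assumption in Step~2. The main delicacy is Step~2: a tensor-product pointwise bound must be split into separate bounds on each factor, which is the usual Fubini trick but requires choosing the fiducial points $\mu_1^\ast,\mu_2^\ast$ both outside the null set where the inequality fails and inside the support of the respective function. No further technical obstruction is anticipated, as the heavy structural work is already encapsulated in Lemma~\ref{lem:pair-to-partial} and Theorem~\ref{thm:hardy_partial_pairs}.
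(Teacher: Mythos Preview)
Your proof is correct and follows essentially the same route as the paper: transport via Lemma~\ref{lem:pair-to-partial} followed by the Hardy-type result for the partial Fourier transform, Theorem~\ref{thm:hardy_partial_pairs}. The paper's proof is terser and jumps directly from the tensor bound $|\hat\B f\otimes\F_k\hat\B f|(x,\omega)\leq \tilde c\,e^{-\pi(\alpha\|x\|^2+\beta\|\omega\|^2)}$ to Theorem~\ref{thm:hardy_partial_pairs}, leaving the separation of that bound into individual Gaussian estimates on $\hat\B f$ and $\F_k\hat\B f$ implicit; your Step~2 spells out exactly this Fubini argument, which is a welcome clarification rather than a deviation.
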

\begin{proof}
    In the notation of Lemma \ref{lem:pair-to-partial}, \eqref{eq:hardy_pair_1} is equivalent to 
    \begin{equation}
        |\hat \B f \otimes \F_k\hat\B f|(x,\omega) \leq \tilde c e^{- \pi (\alpha \norm{x}^2+\beta \norm{\omega}^2)} \quad \text{for a.a. }\lambda = (x,\omega)\in\Rdd.
    \end{equation}
    By Theorem \ref{thm:hardy_partial_pairs}, $\hat\B f\equiv 0$. Equivalently, $f\equiv 0$.
\end{proof}
Note that Theorem \ref{thm:hardy_pairs_1} also follows from Theorem \ref{thm:beurling_pairs}.

Theorems~\ref{thm:beurling_pairs} and \ref{thm:hardy_pairs_1}  are \up s for a symplectic pair $(f,
\hat\Ro_V)$ in place of a Fourier pair $(f, \hat{f})$. We leave it to
the reader to translate these results to \up\ for $\W _{\bA }$ by
reversing the steps of the reduction in
Remark~\ref{rem:quadratic-to-pair}. This amounts to an unpleasant and 
by no means  trivial work of book-keeping that, in our opinion,  does
not seem to offer any new insight. 

For a comparison to other approaches to Hardy's \up\ for metaplectic
pairs we refer to \cite{gosson25,corderogm25}. 

\section{Quantitative uncertainty principles}\label{sec:quantitative}
So far, we have discussed qualitative \up s, in other words,  a condition on
$\W_{\bA } (f,g)$ implies that either $f$ or $g$ vanishes. In some
cases, for instance, in a limiting case of Hardy's \up , we can
derive a certain shape for $f$ and $g$ and obtain a  quantitative \up
s. In this section we offer two examples of such quantitative \up
s. Its purpose is to demonstrate that the method outlined in Section~\ref{sec:3}
is also applicable to quantitative \up s. 

We first  present a complete version of a Hardy-type \up\ for the
partial \stft .

\begin{theorem}\label{thm:hardy_partial_quantitative}   
Let $1\leq k\leq d$ and assume that $f,g\in\ltd$ satisfy
     \begin{equation}
         |V_g^k f(\lambda)|\leq c (1+\norm{\lambda})^N e^{-\pi \alpha
           \norm{\lambda}^2/2} \qquad \text{ for a. a. }\lambda \in\R^{2d}
     \end{equation}
     for some $\alpha,c >0$ and $N\geq 0$.
     
     (i)     If $\alpha>1$, then $f\equiv 0$ or $g\equiv 0$.

     (ii) If $\alpha =1$, then one of the following occurs. 
     Either $f\equiv 0$ or $g\equiv 0$, or neither $f$ nor $g$ is the zero function and both can be written as
     \begin{equation}\label{eq:Hardy_partial_full}
         \begin{split}
             f(x_1, x_2)  & = p(x_1 ,x_2) e^{2\pi i \omega_0\cdot x_1} e^{-\pi (x_1-x_0)A (x_1-x_0)} \quad \text{for a. a. }(x_1,x_2)\in\Rd,\\
             g(x_1,x_2) & = q(x_1 ,x_2) e^{2\pi i \omega_0\cdot x_1} e^{-\pi (x_1-x_0)A (x_1-x_0)}
             \quad \text{for a. a. }(x_1,x_2)\in\Rd.
         \end{split}
     \end{equation}
with  polynomials $p(\cdot, x_2), q(\cdot,\omega_2)$  whose degrees satisfy 
$\mathrm{deg}(p_{x_2})+\mathrm{deg}(q_{-\omega_2})\leq N$ for almost all $(x_2,\omega_2)\in\R^{2(d-k)}$, $x_0, \omega_0\in\C^k$, and $A\in\R^{k\times k}$ a real positive definite matrix. 

(iii) If, in addition,  $k=d$ and $\alpha =1$, then $p$ and $q$ are polynomials,
i.e., $f$ and $g$ are finite linear combinations of Hermite functions.
 \end{theorem}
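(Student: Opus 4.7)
The plan is to follow the slicing strategy of Section~\ref{rem:strategy}: control the slice $V_{g_{-\omega_2}}f_{x_2}$ of the partial \stft, apply the quantitative critical Hardy \up\ for the full \stft\ \cite{GroechenigZimmermann2001,BonamiEtAl2003} on $\R^k$, and then show that the Gaussian parameters extracted slice by slice are in fact independent of $(x_2,\omega_2)$. For part (i), for any $1<\alpha'<\alpha$ one has $c(1+\|\lambda\|)^N e^{-\pi\alpha\|\lambda\|^2/2}\leq C e^{-\pi\alpha'\|\lambda\|^2/2}$, so the hypothesis reduces to that of Theorem~\ref{thm:hardy_partial_1} (with decay rate $\sqrt{\alpha'}>1$ in both coordinates) and yields $f\equiv 0$ or $g\equiv 0$.

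For part (ii), decomposing $\|\lambda\|^2=\|(x_1,\omega_1)\|^2+\|(x_2,\omega_2)\|^2$ and using $(1+\|\lambda\|)^N\leq(1+\|(x_1,\omega_1)\|)^N(1+\|(x_2,\omega_2)\|)^N$, I would deduce that for almost every $(x_2,\omega_2)$ the slice satisfies a critical Hardy-type bound on $\R^{2k}$:
\begin{equation*}
|V_{g_{-\omega_2}}f_{x_2}(x_1,\omega_1)|\leq C(x_2,\omega_2)\,(1+\|(x_1,\omega_1)\|)^N e^{-\pi(\|x_1\|^2+\|\omega_1\|^2)/2},
\end{equation*}
where $C(x_2,\omega_2)=c(1+\|(x_2,\omega_2)\|)^N e^{-\pi\|(x_2,\omega_2)\|^2/2}$. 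The critical Hardy \up\ for the full \stft\ on $\R^k$ \cite{GroechenigZimmermann2001,BonamiEtAl2003} then produces, slice by slice, the dichotomy: either $f_{x_2}\equiv 0$ or $g_{-\omega_2}\equiv 0$, or there exist $A(x_2,\omega_2)$ real positive definite, $x_0(x_2,\omega_2),\omega_0(x_2,\omega_2)\in\C^k$, and polynomials $p_{x_2,\omega_2},q_{x_2,\omega_2}$ with $\deg p_{x_2,\omega_2}+\deg q_{x_2,\omega_2}\leq N$ that place both restrictions on a common Gauss-polynomial envelope.

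If neither $f$ nor $g$ is the zero function, one can choose sets $S_f,S_g\subseteq\R^{d-k}$ of positive measure on which $f_{x_2}$ and $g_{-\omega_2}$ are nonzero; the common-envelope alternative must then hold on a full-measure subset of $S_f\times S_g$. The main technical obstacle is to show that $A,x_0,\omega_0$ are in fact constants independent of $(x_2,\omega_2)$. Fixing $x_2\in S_f$ and varying $\omega_2\in S_g$ produces two Gauss-polynomial representations of the \emph{same} function $f_{x_2}$; a uniqueness argument for such products — comparing moduli at infinity determines $A$ and $\mathrm{Re}\,x_0$, comparing linear phase slopes fixes $\mathrm{Im}\,x_0$ and $\omega_0$, after which the polynomials must agree — forces the Gaussian data to coincide. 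Exchanging the roles of $f$ and $g$ removes the $x_2$-dependence. Extending $p$ and $q$ by zero outside the good slices then yields the representation~\eqref{eq:Hardy_partial_full} with the prescribed degree bound.

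Part (iii) is immediate: when $k=d$ the slicing variables are absent, the critical Hardy \up\ for the full \stft\ applies directly to the pair $(f,g)$, so $p$ and $q$ are ordinary polynomials in $x_1\in\Rd$ and $f,g$ are finite linear combinations of translated, modulated Hermite functions.
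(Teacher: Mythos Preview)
Your proposal is correct and follows essentially the same slicing strategy as the paper: reduce part~(i) to Theorem~\ref{thm:hardy_partial_1} after absorbing the polynomial factor, and for part~(ii) estimate $\|\lambda\|$ in terms of $\|(x_1,\omega_1)\|$ and $\|(x_2,\omega_2)\|$ to obtain a critical Hardy bound on each slice $V_{g_{-\omega_2}}f_{x_2}$, then invoke the Bonami--Demange--Jaming theorem for the full \stft\ on $\R^k$.

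You are in fact more careful than the paper on one point. The paper's proof applies \cite{BonamiEtAl2003} slice by slice and then simply writes the conclusion with a single Gaussian $(A,x_0,\omega_0)$, without explaining why the Gaussian parameters produced for different slices $(x_2,\omega_2)$ must agree. You correctly flag this as the main technical obstacle and outline the argument: fixing $x_2$ and varying $\omega_2$ gives two polynomial--Gaussian representations of the \emph{same} function $f_{x_2}$, and the uniqueness of such a representation (the ratio of two Gaussians is the exponential of a quadratic polynomial, which can equal a nonzero rational function only if that polynomial is constant) forces the Gaussian data to coincide; symmetrically in the other variable. This step is genuinely needed for the statement as formulated, and your sketch of it is sound.
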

 \begin{proof} 
   (i)  is covered by Theorem  \ref{thm:hardy_partial_1}.

   (iii) is ~\cite[Cor.~6.6]{BonamiEtAl2003}.  We will use it in
   the proof of the new case (ii). 

   (ii) Let  $k<d$ and  $\alpha =1$. If either $f\equiv 0$ or
   $g\equiv 0$, there is nothing to prove. So assume that both
   functions are non-zero. This means that the sets
        \begin{equation}
         \begin{split}
             M_1 = \{ x_2\in \bR ^{d-k}: f_{x_2} \in L^2(\bR
             ^{d-k}), f_{x_2} \not \equiv 0\} \\
             M_2 = \{ \omega_2\in \bR ^{d-k}: g_{-\omega_2} \in L^2(\bR
             ^{d-k}), g_{-\omega_2} \not \equiv 0\} 
         \end{split}
     \end{equation}
     both have positive measure in $\bR ^{d-k}$. On the complement of
     $M_1$ the restriction $f_{x_2}$ vanishes, except on a set of
     measure $0$, and likewise for $M_2$.  If $(x_2,\omega_2)
     \in M_1\times M_2$, then the assumption implies that
          \begin{equation}\label{eq:Hardy_partial_implied1}
         \begin{split}
              & |V_{g_{-\omega_2}} f_{x_2}(x_1,\omega_1)| =
              |V_g^kf(x_1,x_2, \omega _1, \omega _2)|\\
              & \leq c (1+\norm{(x_1,x_2,\omega_1,\omega_2)})^N e^{-\pi\alpha(\norm{(x_1,\omega _1)}^2 +\norm{ (x_2,\omega_2)}^2)/2}\\
              & \leq c e^{-\pi\alpha \norm{(x_2,\omega_2)}^2}(1+\norm{(x_2,\omega_2)})^N\cdot (1+\norm{(x_1,\omega_1)})^N e^{-\pi\alpha \norm{x_1,\omega_1}^2/2}
         \end{split}
     \end{equation}
for all $(x_1,\omega_1)\in\R^{2k}$. By a theorem of Bonami, Jaming,
and Demange~\cite{BonamiEtAl2003} for the full \stft, there exist polynomials
$p_{x_2}$ and $q_{-\omega _2}$ with degrees
$\mathrm{deg}(p_{x_2})+\mathrm{deg}(q_{\omega_2})\leq N$ and a
generalized Gaussian function, such that
\begin{align*}
f_{x_2}(x_1) &= p_{x_2} e^{2\pi i \omega_0\cdot x_1} e^{-\pi
  (x_1-x_0)A (x_1-x_0)} \\
g_{-\omega_2}(x_1) & = q_{-\omega_2} e^{2\pi i \omega_0\cdot x_1} e^{-\pi
  (x_1-x_0)A (x_1-x_0)} \, .
  \end{align*}
Since  $f_{x_2} \equiv 0$ for $x_2\not\in M_1$ and $g_{-x_2} \equiv 0$
for $x_2 \not \in M_2$, the functions $f,g$ have the claimed form. 
 \end{proof}
 In addition, one can say that the functions $p$ and $q$ are
 polynomials in $x_1$ and possess Gaussian decay in $x_2$, but 
 nothing more can be said.
 Evidently, with minor adjustments, one can state a quantitative Beurling-type \up.

 As the second example, we discuss Nazarov's \up\ for metaplectic pairs. Nazarov's \up\ \cite{Nazarov1993} is a quantitative version of Benedicks's \up.
For a Fourier pair $(f, \hat{f})$ its multidimensional version from  \cite{Jaming2007} is encapsulated in the following inequality:  for all sets $S,T\subseteq\Rd$ of finite measure
\begin{equation}\label{eq:nazarov_multi_ft}
    \int_\Rd |f(x)|^2 dx 
    \leq C e^{C\min(|S|\cdot |T|,|S|^{1/d}w(T),|T|^{1/d}w(S))} 
    \left( \int_{\Rd\setminus S} |f(x)|^2 dx+ \int_{\Rd\setminus T} |\hat f(x)|^2 dx
    \right).
\end{equation}
Here $C$ is an absolute constant and $w(S)$ denotes the mean width of
a measurable set $S\subseteq\Rd$.  We refer to \cite{Jaming2007} for
further details. 
To simplify the statement, we denote with $\nc(S,T)$ the quantity 
$$
\nc(S,T) = C e^{C\min(|S|\cdot |T|,|S|^{1/d}w(T),|T|^{1/d}w(S))}.
$$ 
We now state a Nazarov-type \up\ for pairs of metaplectic operators. 
\begin{theorem}[Nazarov-type] \label{thm:nazarov}
Let $\A_1 = \V_{Q_1} \D_{L_1}\Ro_{U_1}$, $\A_2 = \V_{Q_2} \D_{L_2}\Ro_{U_2}\in \Spp$. 
If the imaginary part $\im(U)$ of $U = U_2U_1^*$ is invertible, then $(\hat\A_1 f, \hat\A_2 f)$ satisfies a Nazarov-type \up: 
\begin{align}\label{eq:nazarov_meta_pairs}
\lefteqn{     \int_{\Rd}|f(x)|^2 dx  }\\
    & \leq \nc(L_1^{-1} S, \im(U)^{-1} L_2^{-1} T)\left( \,\int_{\Rd\setminus S} |\hat \A_1 f(x)|^2 dx + \int_{\Rd\setminus T} |\hat\A_2 f(x)|^2 dx
    \right).
\end{align}
\end{theorem}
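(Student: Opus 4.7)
The plan is to follow the proof strategy developed in Section~\ref{rem:strategy}: reduce the pair $(\hat\A_1 f, \hat\A_2 f)$ to a Fourier pair $(\tilde h, \F \tilde h)$ via an explicit metaplectic factorization, and then invoke the multidimensional Nazarov inequality \eqref{eq:nazarov_multi_ft}. To that end I would set $g = \hat\A_1 f$, so that $\|g\|_2 = \|f\|_2$ and $\hat\A_2 f = \widehat{\A_2 \A_1^{-1}} g$ up to an irrelevant phase. A direct computation gives $\A_2 \A_1^{-1} = \V_{Q_2} \D_{L_2} \Ro_U \D_{L_1}^{-1} \V_{Q_1}^{-1}$ with $U = U_2 U_1^*$. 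Factoring $\Ro_U$ exactly as in the proof of Lemma~\ref{lem:pair-to-partial} --- write $U = W_1 \Sigma W_2$ with orthogonal $W_1, W_2 \in \mathrm{O}(d,\R)$ and diagonal unitary $\Sigma$ --- the hypothesis that $\im(U) = W_1 \im(\Sigma) W_2$ is invertible is equivalent to every diagonal entry of $\Sigma$ having nonzero imaginary part; consequently the fractional-Fourier decomposition applies coordinatewise and gives $\Ro_\Sigma = \V_C \D_B \J \V_C$ with $B = \im(\Sigma)$ invertible and $C = \re(\Sigma)\,\im(\Sigma)^{-1}$, so that up to a phase
\[
\hat\A_2 \hat\A_1^{-1} = \hat\V_{Q_2}\hat\D_{L_2}\hat\D_{W_1}\hat\V_C \hat\D_B \F \hat\V_C \hat\D_{W_2}\hat\D_{L_1}^{-1}\hat\V_{Q_1}^{-1}.
\]

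Next I would define the auxiliary function $\tilde h = \hat\V_C \hat\D_{W_2 L_1^{-1}} g$. Using $|\hat\V_Q h| = |h|$ pointwise and the change-of-coordinates action of $\hat\D_L$, one verifies
\[
|\tilde h(x)| = |\hat\D_{W_2 L_1^{-1}} g(x)|, \qquad |\hat\A_2 f(x)| = |\det(L_2 W_1 B)|^{-1/2}\,|\F\tilde h((L_2 W_1 B)^{-1} x)|,
\]
which reduces the problem to the honest Fourier pair $(\tilde h, \F\tilde h)$. Applying Nazarov's inequality \eqref{eq:nazarov_multi_ft} to this pair with $S' = W_2 L_1^{-1} S$ and $T' = B^{-1} W_1^{-1} L_2^{-1} T$, the linear substitutions induced by $\hat\D_{W_2 L_1^{-1}}$ and $\hat\D_{L_2 W_1 B}$ convert the two exceptional integrals into $\int_{\R^d \setminus S} |\hat\A_1 f|^2$ and $\int_{\R^d \setminus T} |\hat\A_2 f|^2$, while $\|\tilde h\|_2^2 = \|f\|_2^2$ yields the correct left-hand side.

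To conclude, I would use the identity $\im(U)^{-1} = W_2^{-1} B^{-1} W_1^{-1}$ to rewrite $T' = W_2\, \im(U)^{-1} L_2^{-1} T$ and then invoke the orthogonal invariance of both Lebesgue measure and the mean width to obtain $\nc(S',T') = \nc(W_2 L_1^{-1} S, W_2\, \im(U)^{-1} L_2^{-1} T) = \nc(L_1^{-1} S, \im(U)^{-1} L_2^{-1} T)$, which is exactly the constant in \eqref{eq:nazarov_meta_pairs}. The main (though purely technical) obstacle is the bookkeeping: tracking how each $\hat\D_L$ acts on the exceptional sets in the integrals, checking that the internal chirps $\hat\V_C$ are cleanly absorbed into the definition of $\tilde h$ without affecting the Fourier-pair structure, and ensuring that the orthogonal matrices $W_1, W_2$ appear as common left factors on both set arguments of $\nc$ so that they vanish under orthogonal invariance.
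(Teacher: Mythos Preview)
Your approach is essentially identical to the paper's: both reduce to a genuine Fourier pair via the singular value factorization $U=W_1\Sigma W_2$ and the free decomposition $\Ro_\Sigma=\V_C\D_B\J\V_C$, then apply \eqref{eq:nazarov_multi_ft} and clean up the constant via orthogonal invariance of $\nc$. The paper sets $g=\hat\Ro_{U_1}f$ and cites Lemma~\ref{lem:pair-to-partial} directly, whereas you set $g=\hat\A_1 f$ and unfold the factorization inline, but your auxiliary $\tilde h$ coincides with the paper's $\hat\B g$; the only slip is that your definition of $\tilde h$ should include the factor $\hat\V_{Q_1}^{-1}$ (i.e., $\tilde h=\hat\V_C\hat\D_{W_2L_1^{-1}}\hat\V_{Q_1}^{-1}g$), otherwise the identity for $\F\tilde h$ does not hold---this is harmless for $|\tilde h|$ but essential for the Fourier side.
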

\begin{proof}
Let $f\in\ltd$ and set $g=\hat\Ro_{U_1} f$. By Lemma
\ref{lem:WbA-to-partial-trivial} and \eqref{eq:n4} 
\begin{equation}
    |\hat\A_1 f|\otimes |\hat \A_2 f| 
    =  |\hat \D_{L_1}\hat \Ro_{U_1} f|\otimes |\hat \D_{L_2} \hat\Ro_{U_2} f| 
    = |\hat \D_{L_1} g|\otimes |\hat \D_{L_2} \hat\Ro_{U} g|.
\end{equation}
Thus
\begin{equation}
    \begin{split}
    \mathcal{I} & \coloneqq \,\int_{\Rd\setminus S} |\hat \A_1 f(x)|^2 dx + \int_{\Rd\setminus T} |\hat\A_2 f(x)|^2 dx \\
    & = \,\int_{\Rd\setminus S} |\hat \D_{L_1} g(x)|^2 dx + \int_{\Rd\setminus T} |\hat \D_{L_2} \hat\Ro_{U} g(x)|^2 dx \\
    & = \,\int_{\Rd\setminus L_1^{-1} S} | g(x)|^2 dx + \int_{\Rd\setminus L_2^{-1} T} |\hat\Ro_{U} g(x)|^2 dx.
    \end{split}
\end{equation}
We apply Lemma \ref{lem:pair-to-partial} to $\Ro_U$ and use  the singular value  decomposition  $U = W_1 \Sigma W_2$, where
$W_1,W_2\in\mathrm{O}(d,\R)$ and  $\Sigma\in \mathrm{U}(d,\C ) $ is a diagonal matrix. 
Since $\im(U)$ is invertible by assumption, so is $\im(\Sigma)$. 
Recall that the matrices $\B$ and $\Omega $ in Lemma~\ref{lem:pair-to-partial} were given by  
$\B =\V_{\im(\Sigma)^{-1}\re(\Sigma)} \D_{W_2}\in\Spp$ and $\Omega = \diag(W_2^t, W_1 \im(\Sigma))$, which is block-diagonal. 
Thus Lemma~\ref{lem:pair-to-partial} says that  
      \begin{equation}
          |g|\otimes |\hat\Ro_U g| = \hat\D_{\diag(W_2^t, W_1 \im(\Sigma))}\left(|\hat\B g| \otimes | \F \hat\B g|\right). 
      \end{equation}
 We can now  further transform $\mathcal{I}$ as
\begin{equation}
    \begin{split}
        \mathcal{I}
        & = \,\int_{\Rd\setminus L_1^{-1} S} |\hat\D_{W_2^t} \hat \B g(x)|^2 dx + \int_{\Rd\setminus L_2^{-1} T} |\hat\D_{W_1} \hat\D_{\im(\Sigma)} \F \B g(x)|^2 dx \\
        & = \,\int_{\Rd\setminus W_2 L_1^{-1} S} |\hat \B g(x)|^2 dx +
        \int_{\Rd\setminus \im(\Sigma)^{-1} W_1^{t} L_2^{-1} T}
        |\F\hat \B g(x)|^2 dx. 
    \end{split}
\end{equation}
We now use $\|f\|_2 =  \|g\|_2= \|\hat{\B } f \|_2$ and  apply Nazarov's
inequality~\eqref{eq:nazarov_multi_ft} to $\hat\B g$ as follows: 
\begin{equation}
\begin{split}
        \int_{\Rd}|f(x)|^2 dx 
        &         =  \int_{\Rd} | \hat \B g(x)|^2 dx   \\
        & \leq \nc(W_2 L_1^{-1} S, \im(\Sigma)^{-1} W_1^{t} L_2^{-1} T)   \\
        &\qquad \left(\,
        \int_{\Rd\setminus W_2 L_1^{-1} S} |\hat \B g(x)|^2 dx + \int_{\Rd\setminus \im(\Sigma)^{-1} W_1^{t} L_2^{-1} T} |\F\hat \B g(x)|^2 dx
        \right) \\
        &  = \nc(W_2 L_1^{-1} S,\im(\Sigma)^{-1} W_1^{t} L_2^{-1}  T) \\
    &\quad \left( \,\int_{\Rd\setminus S} |\hat \A_1 f(x)|^2 dx + \int_{\Rd\setminus T} |\hat\A_2 f(x)|^2 dx
    \right),
\end{split}
\end{equation}
Since $\im(U) =W_1\im(\Sigma)W_2$ and the constant $\nc(\cdot,\cdot)$ is invariant under rotations, 
\begin{equation}
    \nc(W_2 L_1^{-1} S,\im(\Sigma)^{-1} W_1^{t} L_2^{-1} T ) = \nc(L_1^{-1} S, \im(U)^{-1} L_2^{-1} T).
\end{equation}
This concludes the proof.
\end{proof}

\section{Relaxations and generalizations}\label{sec:generalizations} 
\subsection{Some relaxation of conditions} 
In the proof of  Theorem \ref{thm:beurling_partial_STFT}   we saw that
the symmetric condition  \eqref{eq:beurling_STFT_partial_L1} could be
replaced by the weaker condition
\begin{equation}\label{eq:beurling_STFT_partial-relaxed}
        \int_{\R^{2k}} \frac{|V^k_g f(x_1,x_2,\omega_1,\omega_2)|}{(1+\norm{x_1}+\norm{\omega_1})^N}e^{\pi |x_1\cdot \omega_1|}\, dx_1 d\omega_1 <\infty
        \quad \text{for a.a. }(x_2,\omega_2)\in\R^{2(d-k)}.
      \end{equation}
for \emph{almost all cross-sections} of $V^k_gf$. Such a relaxation
also works for other \up s stated, e.g. for Theorem
\ref{thm:hardy_partial_1} and Theorem
\ref{thm:gelfand_shilov_partial_STFT}, for the quadratic \tfr s in
Section \ref{sec:pairs},  and  the  quantitative \up s in Theorem
\ref{thm:hardy_partial_quantitative}. 

We observe that condition~\eqref{eq:beurling_STFT_partial-relaxed}  is
the maximal relaxation. One can show that for all $\varepsilon>0$ there exist non-zero functions $f=g\in\ltd$ which violate \eqref{eq:beurling_STFT_partial-relaxed} on a measurable set $P\subseteq\R^{2(d-k)}$ of measure $\varepsilon$.
The easiest examples are functions of the type 
   $f(x_1,x_2) = g(x_1,x_2) = e^{-\pi x_1\cdot x_1}\psi(x_2)$, where $\psi\in C_c([-\varepsilon,\varepsilon]^{d-k})$ is a non-zero even function. The same prototype indicates the limitations of all other \up s discussed in this paper.

   \subsection{The choice of the matrix}
The exponential factor in \eqref{eq:beurling_STFT_partial-relaxed} can
be replaced by more complicated expressions. Note that 
\begin{equation}
    x_1\cdot \omega_1 = \lambda_1\cdot \tfrac{1}{2}\begin{psmallmatrix}
        0 & I_k \\ I_k & 0
    \end{psmallmatrix} \lambda_1 
    = \lambda \cdot \tfrac{1}{2}\begin{psmallmatrix}
        0 & 0 & I_k & 0 \\
        0 & 0 & 0 & 0 \\
        I_k & 0 & 0 & 0 \\
        0 & 0 & 0 & 0
    \end{psmallmatrix} \lambda,\quad \lambda_1=(x_1,\omega_1),\
    \lambda = (x_1,x_2,\omega_1,\omega_2). 
\end{equation}
Using  the symplectic invariance of the \stft \cite{Gosson2011}, 
$    |V_g f| = |\bD_{\A^{-1}} V_{\hat \A g} \hat\A f| $ for $ f,g\in
    L^2(\Rk)$ and $ \A\in\mathrm{Sp}(2k,\R)$,  partial \stft\ can be
    be written as   
\begin{equation}\label{eq:V_gf_Sp_cov}
\begin{split}
    &\phantom{=}\    |V^k_g f(x_1,x_2,\omega_1,\omega_2)| 
    = |V_{g_{-\omega_2}} f_{x_2}(x_1,\omega_1)|
    = |\bD_{\A^{-1}} V_{\hat \A g_{-\omega_2}} \hat\A f_{x_2}(x_1,\omega_1)| \\
    & = |\bD_{\A^{-1}\otimes \mathrm{id}} V^k_{(\hat \A\otimes \mathrm{id}) g} (\hat\A\otimes \mathrm{id}) f(x_1,x_2,\omega_1,\omega_2)|,
\end{split}
\end{equation}
where $\A^{-1}\otimes \mathrm{id}\in \mathrm{GL}(2d,\R)$ is the
corresponding change of coordinates. Consequently,   we can replace $\tfrac{1}{2}\begin{psmallmatrix}
    0 & I_k \\ I_k & 0
\end{psmallmatrix}$   with 
\begin{equation}
\lambda_1\cdot \A^{t}\tfrac{1}{2}\begin{psmallmatrix}
        0 & I_k \\ I_k & 0
    \end{psmallmatrix}\A \lambda_1 
    = \lambda \cdot \tfrac{1}{2}(\A\otimes\mathrm{id})^t \begin{psmallmatrix}
        0 & 0 & I_k & 0 \\
        0 & 0 & 0 & 0 \\
        I_k & 0 & 0 & 0 \\
        0 & 0 & 0 & 0
    \end{psmallmatrix} (\A\otimes\mathrm{id}) \lambda,
    \quad \A\in\mathrm{Sp}(2k,\R).
    \end{equation}
and obtain more complicated formulations of Beurling-type and
Hardy-type \up s. 
 
\section{Appendix}
We indicate a proof of  Lemma~\ref{lem:WRu-to-partial}. This lemma
asserts that the  pre-Iwasawa decomposition  $\bA = \bV_Q\bD_L \bRo_U$
of a symplectic matrix  $\bA\in\Sp$ implies a decomposition of the
associated \mfr\ $W_\bA $. \emph{If 
$U^tU\in\mathrm{U}(2d,\C)$ not  $d\times d$ block-diagonal, then there
exist an index $1\leq k\leq d$, symplectic matrices $\A,\B\in\Spp$ and a matrix $\Omega\in\mathrm{GL}(2d,\R)$ such that}
        \begin{equation}
            |\W_{\bA}(f,g)| = |\hat{\bD}_{\Omega} V^k_{\hat\B g} \hat\A f|,\qquad f,g\in\ltd.
          \end{equation}
 The following proof shows how to compute these matrices from $\bA $. 
\begin{proof} 
 To prove the claim, we successively decompose the occurring
 symplectic matrices and corresponding metaplectic operators. 
By assumption, for arbitrary  $\tau \in\T$, the matrix $\tau^2U^tU
= (\tau U)^t (\tau U)$ is not $d\times d$ block-diagonal. 
    By \cite[Lem.~4.3.]{GroechenigShafkulovska2024},  the imaginary
    part $B_\tau = \im(\tau U)$ of the matrix $\tau U = A_\tau+iB_\tau
    $ is invertible for all by finitely many $\tau =a+ib \in \T$. This
    means that  the corresponding symplectic matrix
    \begin{equation}
        \bRo_{\tau U} 
         = \begin{pmatrix}
            A_\tau &  B_\tau \\
            -B_\tau & A_\tau
        \end{pmatrix} \in \mathrm{Sp}(4d,\R) 
    \end{equation}
    has an invertible upper right $d\times d$ block $B_\tau $. Such a
    matrix is called \emph{free} and, by \cite[Prop.~62.]{Gosson2011},
    can be factored as 
    \begin{equation}\label{eq:free_decomp_tauU}
         \bRo_{\tau U} = \bV_{A_\tau B_\tau^{-1}} \bD_{B_\tau} \bJ \bV_{B_\tau^{-1}A_\tau}.
    \end{equation}
    By Lemma \cite[Lem.~5.3.]{GroechenigShafkulovska2024}, the
    symmetric matrix $P_\tau\coloneqq B_\tau^{-1}A_\tau$ is not
    $d\times d$ block-diagonal. 
    In the following, we fix $\tau$ such
    that $\tau U$ has an invertible imaginary part and drop the index
    $\tau $, i.e., we set $P \coloneqq P_\tau$. 
    
    We proceed with $\bJ\V_{B_\tau^{-1}A_\tau} = \bJ\V_{P}$. The proof of \cite[Thm.~3.4.]{GroechenigShafkulovska2024} relates the  metaplectic representation $\W_{\bJ\bV_{P}}(f,g) = \widehat{\bJ \bV_{P}}(f\otimes \overline{g})$ to a partial \stft. 
    We write $P = \begin{psmallmatrix}
        P_{11} & P_{12} \\ P_{12}^t & P_{22}
    \end{psmallmatrix}$ and 
    let $P_{12} = W_1 \Gamma W_2^t$ be the singular value decomposition of $P_{12}$, i.e., $W_1,W_2\in\mathrm{O}(d,\R)$ and $\Gamma =\diag(\Gamma_1,0_{(d-k)\times (d-k)})$ is a diagonal positive semi-definite matrix of rank 
    $k =\mathrm{rank}(\Gamma)=\mathrm{rank}(\Gamma_1)\geq 1$. We
    computed in \cite[Eq.~3.5.]{GroechenigShafkulovska2024} that
\begin{equation}\label{eq:citedLemma53_ov}
    \begin{split}
         |\det \Gamma_1|^{1/2}\F \hat\bV_{\begin{psmallmatrix}
             0 & \Gamma \\ \Gamma & 0
         \end{psmallmatrix}} (f\otimes \overline{g}) (\Gamma_1 x_1, x_2, \omega_1,\omega_2) =  V_{\hat{g}}^k h (\omega_1, x_2,x_1 , \omega_2),
    \end{split}
\end{equation}
where $h$ is the dilated partial Fourier transform~\footnote{If $d=k$, $h$ is just a dilation of $f$.}
\begin{equation}
   \begin{split}
        h(\Gamma _1 x_1, \omega_2)  & = |\det \Gamma_1|^{-1/2} \int_{\R^{d-k}} f_1 (x_1,x_2)e^{-2\pi i \omega_2\cdot x_2}\, dx_2 \\
        & = |\det \Gamma_1|^{-1/2} \hat\Ro_{\diag(I_k, i I_{d-k})} f(x_1,\omega_2).
   \end{split}
\end{equation}
Equivalently,
\begin{equation}
    h = \hat\D_{\diag(\Gamma_1,I_{d-k})} \hat\Ro_{\diag(I_k, i I_{d-k})} f.
\end{equation}
To capture the dilations on both sides of \eqref{eq:citedLemma53_ov}, we introduce the permutation matrix $\Pi\in\mathrm{O}(2d,\R)$ that acts by 
\begin{equation}
\Pi (\omega_1, x_2,x_1, \omega_2) = (x_1, x_2,\omega_1, \omega_2),\quad x_1,\omega_1\in\Rk,\, x_2,\omega_2\in\R^{d-k}.
\end{equation}
Then \eqref{eq:citedLemma53_ov} is equivalent to 
\begin{equation}\label{eq:citedLemma53_equiv}
    \begin{split}
         \F \hat\bV_{\begin{psmallmatrix}
             0 & \Gamma \\ \Gamma & 0
         \end{psmallmatrix}} (f\otimes \overline{g}) 
         =  
         \hat \bD_{\diag(\Gamma_1, I_{2d-k})}
         \hat\bD_{\Pi}
         V_{\hat{g}}^k \big(\hat\D_{\diag(\Gamma_1,I_{d-k})}
      \hat\Ro_{\diag(I_k, i I_{d-k})} f \big) .
    \end{split}
\end{equation} 
By \cite{GroechenigShafkulovska2024} (proof of Thm.~3.4),
\begin{equation}\label{eq:P-to-Gamma}
    \bJ \bV_P =  \bD_{\diag(W_1,W_2)}\bJ \bV_{\begin{psmallmatrix}
        0 & \Gamma \\ \Gamma & 0 
    \end{psmallmatrix}} \bD_{\diag(W_1^t, W_2^t)} \bV_{\diag(P_{11}, P_{22})},
\end{equation}
thus
\begin{equation}
    \begin{split}
        |\W_\bA(f,g)| 
        & \ =\  |\hat\bD_L \hat\bRo_{\tau U}\hat\bRo_{\overline{\tau} I}(f\otimes\overline{g})| \\
        & \overset{\eqref{eq:free_decomp_tauU}}{=} |\hat\bD_L \hat\bV_{A_\tau B_\tau^{-1}} \hat\bD_{B_\tau} \hat\bJ \hat\bV_{P} (\hat\Ro_{\overline{\tau} I}f\otimes\overline{ \hat\Ro_{{\tau} I} g})| \\
        & \overset{\eqref{eq:P-to-Gamma}}{=} |\hat\bV_{L^{-t}A_\tau B_\tau^{-1} L^{-1}} \hat\bD_L \hat\bD_{B_\tau} \hat\bD_{\diag(W_1,W_2)}\\
        &\qquad \qquad 
        \F \hat\bV_{\begin{psmallmatrix}
        0 & \Gamma \\ \Gamma & 0 
    \end{psmallmatrix}} \hat\bD_{\diag(W_1^t, W_2^t)} \hat\bV_{\diag(P_{11}, P_{22})} (\hat\Ro_{\overline{\tau} I}f\otimes\overline{ \hat\Ro_{{\tau} I} g})| \\
    &\ =\ |\hat\bD_{ LB_\tau \diag(W_1,W_2)}\\
        &\qquad \qquad 
        \F \hat\bV_{\begin{psmallmatrix}
        0 & \Gamma \\ \Gamma & 0 
    \end{psmallmatrix}} (\hat\D_{W_1^t}\hat\V_{P_{11}} \hat\Ro_{\overline{\tau} I}f\otimes
    \overline{ \hat\D_{W_2^t}\hat\V_{P_{-22}} \hat\Ro_{{\tau} I} g})| \\
    & \overset{\eqref{eq:citedLemma53_equiv}}{=} |\hat\bD_{ LB_\tau \diag(W_1,W_2)\,\diag(\Gamma_1, I_{2d-k}) \,\Pi }\\
        &\qquad \qquad 
         V^k_{\F  \hat\D_{W_2^t}\hat\V_{P_{-22}} \hat\Ro_{{\tau} I} g} \big(\hat\D_{\diag(\Gamma_1,I_{d-k})} \hat\Ro_{\diag(I_k, i I_{d-k})} \hat\D_{W_1^t}\hat\V_{P_{11}} \hat\Ro_{\overline{\tau} I}f \big) |.
    \end{split}
\end{equation}
Now it is clear that the claim is satisfied for 
\begin{equation}
    \begin{split}
        \Omega & = LB_\tau \diag(W_1,W_2)\,\diag(\Gamma_1, I_{2d-k})\,\Pi ,\\
        \A & = \hat\D_{\diag(\Gamma_1,I_{d-k})} \hat\Ro_{\diag(I_k, i I_{d-k})} \hat\D_{W_1^t}\hat\V_{P_{11}} \hat\Ro_{\overline{\tau} I},\\
        \B & =\F  \hat\D_{W_2^t}\hat\V_{P_{-22}} \hat\Ro_{{\tau} I}.
    \end{split} 
\end{equation}
\end{proof}

\end{document}